\newtheorem{theorem}{\bf Theorem}[section]
\newtheorem{lemma}[theorem]{\bf Lemma}
\newtheorem{definition}[theorem]{\bf Definition}
\newtheorem{corollary}[theorem]{\bf Corollary}
\newtheorem{proposition}[theorem]{\bf Proposition}
\begin{document}

%%%% Article title to be placed here
\title{Constructing links of isolated singularities of polynomials $\mathbb{R}^4\to\mathbb{R}^2$}

\author{%%%% Author details
Benjamin Bode}
%\date{}

%%%%%%%%% Insert author address here
\address{H H Wills Physics Laboratory, University of Bristol, Bristol BS8 1TL, UK\\
benjamin.bode@bristol.ac.uk
}

%%%% Subject entries to be placed here %%%%
%\subject{Mathematical Physics, Topology}

%%%% Keyword entries to be placed here %%%%
\keywords{Knot, singularity, braid, applied topology}
%%% good choices?

%%%% Insert corresponding author and its email address}
%\corres{Benjamin Bode, Mark Dennis\\
%\email{benjamin.bode@bristol.ac.uk}, \email{mark.dennis@bristol.ac.uk}}
%BB: wrote Benjamin instead of Ben
%%% mrd: final final version, from now on all edits require comment

%%%% Abstract text to be placed here %%%%%%%%%%%%
\begin{abstract}
We show that if a braid $B$ can be parametrised in a certain way, then previous work \cite{bode:2016polynomial} can be extended to a construction of a polynomial $f:\mathbb{R}^4\to\mathbb{R}^2$ with the closure of $B$ as the link of an isolated singularity of $f$, showing that the closure of $B$ is real algebraic. In particular, we prove that closures of squares of strictly homogeneous braids and certain lemniscate links are real algebraic. We also show that the constructed polynomials satisfy the strong Milnor condition, providing an explicit fibration of the complement of the closure of $B$ over $S^1$.
\end{abstract}
%%%%%%%%%%%%%%%%%%%%%%%%%%%
\maketitle
%%%%%%%%%% Insert the texts which can accomdate on firstpage in the tag "fmtext" %%%%%

\section{Introduction}\label{sec:intro}
%%%% Insert A head here
It is well known that the links that arise as links around isolated singular points of complex plane curves, also known as algebraic links, are certain iterated torus links \cite{brauner:1928geometrie}, \cite{burau:1933knoten}, \cite{burau:1934verkettungen}, \cite{kahler:1929verzweigung}, \cite{zariski:1932topology}. 
These form a special class of fibered links for which their defining property implies several interesting results. For example the monodromy of an algebraic link can be chosen to be without any fixed point \cite{acampo:1975mono},\cite{le:1975mono} and algebraic links are determined by their Alexander polynomials \cite{le:1972noeuds}. Furthermore, there appear to be deep connections between the HOMFLY-homology of an algebraic link, DAHA-superpolynomials and the Hilbert scheme or the Jacobian varitey of the corresponding complex curve \cite{cp:2016daha}, \cite{ors:2012hilbert}, \cite{os:2012hilbert}.

In this paper we consider polynomial maps $f:\mathbb{R}^4\to\mathbb{R}^2$ rather than complex plane curves. We call a point $x\in\mathbb{R}^4$ a \textit{singular} point or a singularity of $f$ if the Jacobian matrix $\nabla f(x)$ is zero. If the rank of $\nabla f$ at $x$ is less than two, we call $x$ a critical point. The Cauchy-Riemann equations imply that if one considers a complex plane curve as a polynomial map $\mathbb{R}^4\to\mathbb{R}^2$, then a point is singular if and only if it is critical. In general, the singular points are by definition a subset of the critical points.  
The links around singularities of real polynomial maps are less well studied than in the complex case and in fact they are not classified yet. We show that there are infinite families of links that arise in this way. 

Singular points of polynomial maps $f:\mathbb{R}^4\to\mathbb{R}^2$ can be isolated in two very different ways. A singular point $x$ of $f$ is called \textit{weakly isolated} if it has an $\epsilon$-neighbourhood $B_{\epsilon}$ such that $x$ is the only critical point in $B_{\epsilon}$ that is also part of the vanishing set. 
The link type of the intersection of three-spheres of small enough radius about $x$ with the vanishing set of $f$ is then, exactly as in the case of complex plane curves, independent of the radius and we call the resulting 1-manifold the link of the singularity. The links that arise in this way are called weakly algebraic links. Weak isolation is not a very strong condition and Akbulut and King showed that every link is weakly algebraic \cite{ak:1981all}.

The situation changes if we impose stricter conditions. A singular point $x$ of $f$ is called \textit{isolated} if it has an $\epsilon$-neighbourhood $B_{\epsilon}$ such that $x$ is the only critical point in $B_{\epsilon}$. Again we define the link of the singularity to be the intersection of $f^{-1}(0)$ with $S^{3}_{\epsilon}$ and call the links that arise in this way real algebraic links.

Unfortunately, the terms 'real algebraic knots' and 'algebraic knots' are used in several different contexts. Closures of compositions of rational tangles are called algebraic knots \cite{conway:1970enumeration} as well as the links of isolated singular points of complex plane curves. Additionally, in our terminology the title of Akbulut and King's paper 'All knots are algebraic' \cite{ak:1981all} translates to 'All knots are weakly algebraic'. Oleg Viro and others have developed a branch of knot theory called real algebraic knot theory where the knots of interest are (projective) algebraic varieties \cite{viro:2001writhe}. Our use of the term 'real algebraic' goes back to Perron's paper \cite{perron:1982noeud} whose title translates to 'The figure eight knot is real algebraic'. We find this fitting as it stresses the analogy to the case of complex plane curves.  

Compared to weak isolation, the stronger notion of isolation is much more restrictive. In particular, Milnor showed that every real algebraic link is fibered \cite{milnor:1968singular}. Benedetti and Shiota conjectured in \cite{benedetti1998:real} that the two sets are actually identical and that every fibered link should be real algebraic. 

This conjecture is to our knowledge still open. 
Naturally, all algebraic links are real algebraic and some examples of non-algebraic links have been shown to be real algebraic, including the figure-eight knot and the Borromean links by Perron \cite{perron:1982noeud} and Rudolph \cite{rudolph:1987isolated}. Furthermore, Looijenga showed that if $K$ is fibered, then the connected sum $K\# K$ is real algebraic \cite{looijenga:1971note}. 
The proofs by Perron, Rudolph and Looijenga illustrate a difficulty of the conjecture, namely that there does not seem to be any way of proving that a given link is real algebraic other than constructing a corresponding polynomial $f$ explicitly. 

Motivated by applications to physical systems such as optical scalar fields \cite{bd:2001knotted}, \cite{dkjop:2010isolated}, non-linear field theories \cite{sutcliffe:2007knots}, quantum-mechanics \cite{berry:2001hydrogen}, liquid crystals \cite{ma:2014knotted} and topological fluids \cite{moffatt:1969degree}, we developed a construction of polynomials $f:\mathbb{R}^4\to\mathbb{R}^2$ such that $f^{-1}(0)\cap S^{3}$ is any given link \cite{bode:2016lemniscate}, \cite{bode:2016polynomial}, \cite{bode:201652}. In fact, the constructed polynomials are semiholomorphic; they can be written as a complex polynomial in complex variables $u$, $v$ and the complex conjugate $\overline{v}$.

%History (Milnor, Brauner, Perron, Rudolph, Looijenga, Akbulut and King, us).

The semiholomorphic polynomials described in \cite{bode:2016polynomial} have the desired link as their zero level set on the unit three-sphere. However, the construction does not provide any information about three-spheres of different radii and it is straightforward to see that the constructed polynomials are typically not of the desired form with a (weakly) isolated singular point at the origin and the required link around it \cite{bode:201652}.

In this paper we introduce conditions under which we can manipulate the polynomials constructed in \cite{bode:2016polynomial} to obtain a polynomial that satisfies all of Akbulut and King's properties.
Furthermore, if additional conditions are met, the singular point at the origin is shown to be not only weakly isolated, but isolated. This explicit construction allows us to show the real algebraicity of an infinite family of links. The main result of this paper is the following theorem.

\begin{theorem}
\label{even}
Let $B$ be a braid on $s$ strands such that $B=w^2$ for some strictly homogeneous braid $w$. Then there exists a function $f:\mathbb{C}^{2}\to\mathbb{C}$ such that
\begin{itemize}
\item $f$ is a polynomial in $u$, $v$ and $\overline{v}$,
\item as a map from $\mathbb{R}^{4}$ to $\mathbb{R}^{2}$ the map $f$ has an isolated singularity at the origin,
\item $f^{-1}(0)\cap S^{3}_{\rho}$ is ambient isotopic to the closure of $B$ for all positive $\rho\leq1$,
\item $\deg_{u} f=s$. 
\end{itemize}
Hence the closure of $B$ is real algebraic. 
\end{theorem}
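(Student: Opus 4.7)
My plan is to construct $f$ by adapting the semiholomorphic polynomial produced by the construction of \cite{bode:2016polynomial} for the braid $B = w^2$, using the squaring hypothesis to enforce a form of radial quasihomogeneity and strict homogeneity of $w$ to control the critical locus.

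The construction of \cite{bode:2016polynomial} yields a semiholomorphic polynomial of $u$-degree $s$ of the form $g(u,v,\overline v) = \prod_{j=1}^s (u - p_j(v, \overline v))$, where the $p_j$ interpolate a chosen parametrisation of the strands of $B$ on the torus $|v|=1$; its zero set on $S^3_1$ is isotopic to $\hat B$, but is uncontrolled on other spheres and typically has nonisolated critical points at the origin. The first key observation is that, because $B$ is the square of a braid, the strand configuration at $v = e^{\rmi \theta}$ is a permutation of the configuration at $v = -e^{\rmi \theta}$, so the strand data descends through $v \mapsto v^2$. This allows me to choose each $p_j$ as a Laurent polynomial in $v^2$ and $\overline v^2$. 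After multiplying through by an appropriate power of $v$, the resulting polynomial $f$ can be made quasihomogeneous: for positive integers $a,b,c$ one has $f(\lambda^a u, \lambda^b v, \lambda^b \overline v) = \lambda^c f(u, v, \overline v)$ for all $\lambda > 0$. Combined with isolation of the origin, this quasihomogeneity yields the third bullet via a standard radial-flow argument: the trajectories of $(u,v) \mapsto (\lambda^a u, \lambda^b v)$ supply an ambient isotopy between $f^{-1}(0) \cap S^3_1$ and $f^{-1}(0) \cap S^3_\rho$ for every $\rho \in (0,1]$.

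To verify isolation I would compute $\partial_u f$, $\partial_v f$ and $\partial_{\overline v} f$ explicitly. On the slice $v = 0$ the polynomial reduces to a scalar multiple of $u^s$, so its derivatives vanish there only at the origin. Off that slice, the logarithmic identity $\partial_u f = f \cdot \sum_j (u - p_j)^{-1}$ shows that a simultaneous zero of $f$ and $\partial_u f$ would force two of the $p_j$ to agree at a common value of $v$; the strict-homogeneity hypothesis on $w$ rules out such coincidences on $|v| \leq 1$, since each generator contributes a controlled, sign-consistent half-twist to the strand parametrisation. The remaining derivatives $\partial_v f$ and $\partial_{\overline v} f$ are then controlled by direct calculation with the explicit form of the $p_j$, using the strictly homogeneous structure to locate their zero set away from $f^{-1}(0)$.

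I expect the principal difficulty to lie in the second step: choosing the $p_j$ to be simultaneously polynomial in $v^2, \overline v^2$, faithful to the braid isotopy class of $w^2$, and compatible with a common quasihomogeneous scaling. In the original construction the polynomial only had to produce the correct zero set on a single three-sphere, and flexible polynomial approximations to trigonometric strand data were sufficient. Here the same polynomial must produce the correct link on every sphere $S^3_\rho$ with $\rho \leq 1$, which forces the approximation to respect the radial scaling; the squaring hypothesis $B = w^2$ is precisely what supplies the $\mathbb{Z}_2$-invariance needed to reconcile these requirements, and strict homogeneity of $w$ is what finally allows the critical-locus analysis in the third paragraph to close.
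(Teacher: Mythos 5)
Your overall architecture does track the paper's (evenness coming from $B=w^2$ gives polynomiality in $v,\overline{v}$; a radial rescaling controls all spheres; strict homogeneity of $w$ is the isolation input), but the two steps carrying the real content are missing. First, ``multiplying through by an appropriate power of $v$'' cannot produce quasihomogeneity: the interpolating root functions contain monomials of different degrees, and multiplying the whole product by $v^N$ or $(v\overline{v})^N$ shifts every weighted degree by the same amount. What is needed, and what the paper actually does, is to decouple the radial from the angular dependence and rescale $u$: one sets $p_{a,b,k}(u,v)=(v\overline{v})^{sk}f_{a,b}\bigl(u/(v\overline{v})^{k},\,v/\sqrt{v\overline{v}}\bigr)$, so that over $|v|=r$ the roots are $r^{2k}$ times the unit-circle roots; the hypothesis $B=w^2$ enters by parametrising $w$ and substituting $t\mapsto 2t$, which makes all exponents of $e^{\pm it}$ even, so that $\sqrt{v\overline{v}}$ occurs only to even powers and $p_{a,b,k}$ is a genuine polynomial in $u,v,\overline{v}$ of $u$-degree $s$. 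You flag precisely this compatibility issue as ``the principal difficulty'' and leave it open, so the second bullet and your radial-flow argument for the third both hang in the air. (Your radial-flow idea itself is fine and is a legitimate alternative to the paper's direct check that $p_{a,b,k}^{-1}(0)$ meets every $S^3_\rho$, $\rho\le 1$, transversely, provided you also re-verify the unit-sphere statement for the modified polynomial, which holds because it agrees with $f_{a,b}$ on $|v|=1$.)

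The more serious gap is in the isolation step, where your argument only establishes weak isolation. Simultaneous vanishing of $f$ and $\partial_u f$ at $v\neq 0$ is impossible for \emph{any} braid parametrisation, simply because the strands are disjoint and the roots simple; strict homogeneity plays no role there. The genuine difficulty is the two-dimensional set of points where $\partial_u f=0$ but $f\neq 0$, which is nonempty for every $v\neq0$ once $s\geq 2$: at such points the $u$-block of the real Jacobian vanishes, and one must show the remaining $2\times 2$ block in the $v$-directions has rank two; note that nonvanishing of $\partial_v f$ or $\partial_{\overline{v}}f$ alone does not suffice, since $f$ is not holomorphic in $v$. The paper's mechanism is the one your sketch lacks: by a lemma of \cite{bode:2016polynomial}, a strictly homogeneous $w$ admits a Fourier parametrisation whose $g_{a,b}$ has no argument-critical points, this property survives the substitution $t\mapsto 2t$, and then at any point with $\partial_u p_{a,b,k}=0$ one has $\partial_{\arg v}\arg(p_{a,b,k})\neq 0$, while the radial scaling gives $\partial_r p_{a,b,k}=2sk\,r^{-1}p_{a,b,k}$; together these make the $(r,\arg v)$ minor of the Jacobian nonzero. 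Your phrase about locating the zero set of $\partial_v f,\partial_{\overline{v}}f$ ``away from $f^{-1}(0)$'' points in the wrong direction: critical points away from $f^{-1}(0)$ are exactly what must be excluded to upgrade weak isolation to isolation, and no mechanism for doing so is offered.
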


Hopefully the proof might be extended to more links and some of the desirable properties of algebraic links can be found for links of real singularities as well.

The structure of the paper is as follows. In Section \ref{review} we review the construction of semiholomorphic polynomials whose nodal set on the unit three-sphere is any given knot or link from \cite{bode:2016polynomial}. Section \ref{weak} discusses ways of manipulating these polynomials to obtain maps with weakly isolated singularities and Section \ref{polynomial} states when the resulting maps can be taken to be polynomials. The proof of Theorem \ref{even} can be found in Section \ref{real}. In Section \ref{lemniscatesection} we give numerical indications that some links not covered by Theorem \ref{even} can be proven to be real algebraic in a similar fashion. In Section \ref{milnor} we show that the argument of the constructed polynomial $f$ is a fibration of $S^3_{\epsilon}\backslash f^{-1}(0)$ over $S^1$.

\section{Knotted nodal sets on $S^{3}$}
\label{review}

The construction of semiholomorphic polynomials with knotted zero level sets on the unit three-sphere in \cite{bode:2016polynomial} is built around the idea of Alexander's theorem that every link is the closure of some braid \cite{alexander:1923lemma}. It is convenient to study braids (or rather their equivalence classes) as elements of the Artin Braid Group and the algebraic properties have led to important insights in knot theory. In our construction however, it is more convenient to consider a braid $B$ on $s$ strands as a collection of $s$ disjoint parametric curves
\begin{equation}
\label{eq:para}
\bigcup_{j=1}^{s}\left(X_{j}(t),Y_{j}(t),t\right),\quad t\in[0,2\pi],
\end{equation} 
such that there exists a permutation $\pi_{B}\in S_{s}$ such that 
\begin{equation}(X_{j}(2\pi),Y_{j}(2\pi))=(X_{\pi_{B}(j)}(0),Y_{\pi_{B}(j)}(0)) 
\end{equation}
for all $j\in\{1,2,\ldots,s\}$.

For a parametrisation as in Equation (\ref{eq:para}), we can define a family of polynomials $g_{a,b}:\mathbb{C}\times[0,2\pi]\to\mathbb{C}$,
\begin{equation}
\label{eq:gab}
g_{a,b}(u,t)=\prod_{j=1}^{s}\left(u-aX_{j}(t)-biY_{j}(t)\right),\qquad t\in[0,2\pi]
\end{equation}
with $a,b>0$. By construction the zero level set of $g_{a,b}$ is the braid $B$ for any choice of $a$ and $b$.

Let $\mathcal{C}$ be the set of link components of the closure of $B$ and $s_{C}$ be the number of strands forming the link component $C\in\mathcal{C}$. Suppose now that the parametrisation (\ref{eq:para}) is of the form
\begin{equation}
\label{eq:fourier}
\bigcup_{C\in\mathcal{C}}\bigcup_{j=1}^{s_{C}}\left(F_{C}\left(\frac{t+2\pi j}{s_{C}}\right),G_{C}\left(\frac{t+2\pi j}{s_{C}}\right),t\right),\qquad t\in[0,2\pi],
\end{equation}
where $F_{C}$ and $G_{C}$ are trigonometric polynomials
\begin{equation}
\label{eq:trigpoly}
F_{C}(t)=\sum_{j=-N_{C}}^{N_{C}}a_{C,j}e^{ijt},\quad G_{C}(t)=\sum_{j=-M_{C}}^{M_{C}}b_{C,j}e^{ijt},
\end{equation} 
where $a_{C,-j}=\overline{a}_{C,j}$ and $b_{C,-j}=\overline{b}_{C,j}$ for all $C$ and all $j$. We often call a parametrisation like this a Fourier parametrisation.
Then by a result from \cite{bode:2016polynomial} $g_{a,b}$ can be written as a polynomial in $u$, $e^{it}$ and $e^{-it}$. This expression  is unique once it is simplified so that it does not contain any monomials with factors of the form $e^{it}e^{-it}$. Each $g_{a,b}$ is thus the restriction of a semiholomorphic polynomial $f_{a,b}$ to the set $\mathbb{C}\times S^{1}$, i.e. there is a  polynomial $f_{a,b}:\mathbb{C}^{2}\to\mathbb{C}$ in $u$, $v$ and $\overline{v}$, such that $f_{a,b}(u,e^{it})=g_{a,b}(u,t)$ for all $u$ and $t$. Hence $f_{a,b}^{-1}(0)\cap(\mathbb{C}\times S^{1})$ is the closure of $B$ for all $a$ and $b$.

We showed in \cite{bode:2016polynomial} that if $\lambda>0$ is chosen sufficiently small, then the zero level set of $f_{\lambda a,\lambda b}$ on the unit three-sphere $S^{3}$ is the closure of $B$. Knowing how to find a braid parametrisation as in Equation (\ref{eq:fourier}) and how small $\lambda$ has to be makes this construction algorithmic.
In fact the nature of our construction allows us to establish certain properties of the constructed functions such as a certain stability of the link under small perturbations of the coefficients and a bound on the polynomial degree in terms of braid data \cite{bode:2016polynomial}.

\section{Weakly isolated singularities}
\label{weak}
%Akbulut and King, but in semiholomorphic. Taylor approximation of square-roots gives disjoint roots everywhere. Scaling $u$ by sufficiently high powers of $v\overline{v}$ guarantees transverse intersections.

In order to prove Theorem \ref{even} we need to make certain alterations to the construction described in \cite{bode:2016polynomial} and outlined in Section \ref{review}. Let $B$ be a braid on $s$ strands. Then there is an algorithm that generates a family of semiholomorphic polynomial $f_{a,b}:\mathbb{C}^{2}\to\mathbb{C}$ in $u$, $v$ and $\overline{v}$ such that $f_{a,b}^{-1}(0)\cap S^{3}$ is the closure of $B$ if $a$ and $b$ are small enough. However, it does not necessarily have a weakly isolated singularity and the zero level set on three-spheres of small radii is typically different from the closure of $B$.

In some cases however, the function $f_{a,b}$ can be manipulated in such a way that it has a weakly isolated singular point at the origin with the desired link around it.

\begin{lemma}
\label{general}
%Let $p_{a,b}:\mathbb{C}\times[0,1]\times[0,2\pi]\to\mathbb{C}$ be a family of smooth functions such that 
%\begin{itemize}
%\item there exists a $s\in\mathbb{N}$ such that for every $r$ and $t$ the function $p_{a,b}(\bullet,r,t)$ is a polynomial in $u$ with $\deg_{u}p_{a,b}=s$,
Let $q_{1},q_{2}\in\mathbb{R}_{\geq 0}$ and $g_{a,b}$ constructed as in Equation (\ref{eq:gab}) from a braid parametrisation (\ref{eq:fourier}) of the braid $B$ on $s$ strands. Let $f_{a,b}$ be constructed as in Section \ref{review} and $k\geq \deg f_{a,b}/2s$.
%there exists a braid parametrisation as in Equation (\ref{eq:fourier})and a $q\in\mathbb{R}_{\geq 0}$ such that $p_{a,b}(u,r,t)=g_{r^q a,r^q b}(u,t)$ for all $u\in\mathbb{C}, t\in[0,2\pi]$, $r\in(0,1]$ and $g$ constructed from Equation (\ref{eq:fourier}) as in Equation (\ref{eq:gab}).%for some braid polynomial $g_{a,b}$ as in Section \ref{review},
%\item for every $r\in(0,1]$ and $t\in[0,2\pi]$ the polynomial $p(\bullet,r,t)$ has only simple roots.
%\end{itemize}
We define 
\begin{equation}
\label{eq:firstscaling}
P_{a,b,k}:\mathbb{C}\times[0,1]\times S^1\to\mathbb{C},\quad P_{a,b,k}(u,r,t)=r^{2sk} g_{r^{q_{1}} a,r^{q_{2}} b}\left(\frac{u}{r^{2k}},t\right).
\end{equation}
Then, since $P_{a,b,k}(u,0,t)=u^s$ for all $t\in[0,2\pi]$, we can define $p_{a,b,k}:\mathbb{C}^2\to\mathbb{C}$, $p_{a,b,k}(u,re^{it})=P_{a,b,k}(u,r,t)$. For small enough $a,b>0$ the map $p_{a,b,k}:\mathbb{R}^4\to\mathbb{R}^2$ has a weakly isolated singular point at the origin and $p_{a,b,k}^{-1}(0)\cap S^{3}_{\rho}$ is the closure of $B$ for all $\rho\in(0,1]$.
\end{lemma}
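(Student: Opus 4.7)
The plan is to proceed in three steps: verify that $p_{a,b,k}$ is well defined and sufficiently regular as a map $\mathbb{R}^4\to\mathbb{R}^2$, then identify its vanishing set on each sphere $S^3_\rho$ with the closure of $B$, and finally deduce weak isolation of the singularity at the origin.

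For the first step I would observe that $P_{a,b,k}(u,0,t)=\prod_{j=1}^{s}u=u^s$ is independent of $t$, so the rule $p_{a,b,k}(u,re^{it})=P_{a,b,k}(u,r,t)$ consistently extends to $p_{a,b,k}(u,0)=u^s$, and unfolding the scaling yields the factorisation
\begin{equation*}
p_{a,b,k}(u,v) = \prod_{j=1}^{s}\bigl(u - |v|^{2k+q_1}\,a\,X_j(\arg v) - |v|^{2k+q_2}\,b\,i\,Y_j(\arg v)\bigr).
\end{equation*}
To control regularity I would expand each factor and use that the coefficient of $u^{s-m}$ in $g_{a,b}$ is the $m$-th elementary symmetric polynomial in the $aX_j+biY_j$, hence of trigonometric degree at most $mN_{\max}$, where $N_{\max}$ is the maximum Fourier frequency in a single factor; consequently $\deg f_{a,b}=sN_{\max}$, so the hypothesis $k\geq\deg f_{a,b}/(2s)$ becomes $2k\geq N_{\max}$. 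After the substitutions $a\mapsto r^{q_1}a$, $b\mapsto r^{q_2}b$, $u\mapsto u/r^{2k}$ and multiplication by $r^{2sk}$, every monomial $a^{i}b^{j}u^{s-m}e^{int}$ with $i+j=m$ and $|n|\leq mN_{\max}$ comes weighted by $r^{iq_1+jq_2+2km}$; since $2km\geq|n|$, this factor combines with $e^{\pm i|n|t}$ into a smooth function of $v,\overline{v}$ vanishing to sufficient order at the origin for the Jacobian of $p_{a,b,k}$ to be well defined there.

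Second, for the link structure: for each $\rho\in(0,1]$ the vanishing condition on $S^3_\rho$ reads $u=r^{2k+q_1}aX_j(t)+r^{2k+q_2}biY_j(t)$ for some $j$, subject to $|u|^2+r^2=\rho^2$, where $v=re^{it}$. I would show that for fixed $(\rho,t,j)$ the real function $\psi_{t,j}(r):=r^2+|r^{2k+q_1}aX_j(t)+r^{2k+q_2}biY_j(t)|^2$ is continuous and, for $a,b$ chosen sufficiently small, strictly increasing on $[0,1]$ from $0$ to a value close to $1$; hence $\psi_{t,j}(r)=\rho^2$ admits a unique solution $r(\rho,t,j)\in(0,1]$ depending smoothly on $(\rho,t)$ by the implicit function theorem. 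Assembling these solutions as $(t,j)$ vary produces a smooth closed braid on $s$ strands in $S^3_\rho$ whose braid type depends continuously on $\rho$ and at $\rho=1$ coincides with the closure of $B$ via the construction recalled in Section \ref{review}; hence $p_{a,b,k}^{-1}(0)\cap S^3_\rho$ is ambient isotopic to the closure of $B$ for every $\rho\in(0,1]$.

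Third, weak isolation follows because $p_{a,b,k}(u,0)=u^s$ with $s\geq 2$ forces the Jacobian of $p_{a,b,k}:\mathbb{R}^4\to\mathbb{R}^2$ to vanish at the origin, while step two identifies $p_{a,b,k}^{-1}(0)\setminus\{0\}$ as the total space of a smooth one-parameter family of closed braids, i.e., a smooth $2$-submanifold of $\mathbb{C}^2\setminus\{0\}$; regularity forces the $2\times 4$ Jacobian of $p_{a,b,k}$ to have full rank $2$ along it, which precludes further critical points on the vanishing set in a punctured neighbourhood of the origin. I expect the main obstacle to be step one, where the degree hypothesis $k\geq\deg f_{a,b}/(2s)$ must be tracked precisely against the elementary-symmetric structure and the trigonometric degrees of the coefficients of $g_{a,b}$, together with the parity compatibility between the $r$-exponents produced by rescaling and the trigonometric frequencies, to ensure sufficient regularity at the origin for the Jacobian to be meaningful.
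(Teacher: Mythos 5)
Your steps one and two are essentially sound: the explicit description of $p_{a,b,k}^{-1}(0)\setminus\{v=0\}$ as the union of graphs $u=r^{2k+q_1}aX_j(t)+ir^{2k+q_2}bY_j(t)$, together with the monotonicity of $\psi_{t,j}(r)=r^2+|r^{2k+q_1}aX_j(t)+ir^{2k+q_2}bY_j(t)|^2$, gives a clean isotopy argument showing the intersection with $S^3_\rho$ is a closed braid of constant type for $\rho\in(0,1]$; this is a legitimate, somewhat more elementary substitute for the paper's transversality computation (note that monotonicity needs no smallness of $a,b$ --- smallness is only needed, as in the paper, to identify the $\rho=1$ link with the closure of $B$ via the construction of \cite{bode:2016polynomial}).

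The genuine gap is in your third step, which is the heart of the weak-isolation claim. You argue that since $p_{a,b,k}^{-1}(0)\setminus\{0\}$ is a smooth $2$-submanifold, ``regularity forces the $2\times 4$ Jacobian to have full rank along it.'' That inference is the false converse of the regular value theorem: a map can have a smooth zero set while failing to be a submersion there (e.g.\ $(u,v)\mapsto u^2$ has the smooth zero set $\{u=0\}$ but vanishing Jacobian along it), so nothing in your steps one and two precludes critical points of $p_{a,b,k}$ lying on the vanishing set away from the origin. The missing argument --- and the one the paper uses --- is that for $v\neq 0$ the polynomial $p_{a,b,k}(\cdot,v)$ is monic of degree $s$ in $u$ with $s$ \emph{distinct} roots (because the braid strands are disjoint), hence every root is simple and $\partial p_{a,b,k}/\partial u\neq 0$ there; since $p_{a,b,k}$ is holomorphic in $u$ for fixed $v$, the Cauchy--Riemann equations in $u$ show the $2\times 2$ block of the real Jacobian in the $(\mathrm{Re}\,u,\mathrm{Im}\,u)$ directions has determinant $|\partial p_{a,b,k}/\partial u|^2\neq 0$, so the Jacobian has rank $2$ at every point of the zero set other than the origin. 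Your own step two contains the needed distinctness of roots, so the repair is short, but as written the full-rank claim is unsupported. (A smaller point: at the origin you only invoke $p_{a,b,k}(u,0)=u^s$, which kills the $u$-derivatives; vanishing of the $v$-derivatives requires the positive powers of $r=|v|$ guaranteed by the hypothesis on $k$, which you set up in step one but should cite explicitly here.)
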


\begin{proof}
First of all, by definition $P_{a,b,k}$ and $p_{a,b,k}$ are polynomials in $u$ for every fixed $r$ and $t$. A straightforward calculation shows that $p_{a,b,k}(0,0)=0$ and $(0,0)$ is a singular point when $p_{a,b,k}$ is viewed as a map $\mathbb{R}^4\to\mathbb{R}^2$. 

Next we need to show that the singular point at the origin is weakly isolated. Note that $p_{a,b,k}(u,0)=u^s$, so the origin is the only point with $p_{a,b,k}(u,v)=0$ and $v=0$. Now let $(u,v)=(u,re^{it})\in\mathbb{C}\times \mathbb{C}\backslash\{0\}$ be in the zero level set. We find that $p_{a,b,k}(u,re^{it})=r^{2sk}g_{r^{q_{1}}a,r^{q_{2}}b}(u/r^{2k},r,t)$ and since for every $r\in(0,1]$ and $t\in[0,2\pi]$ the polynomials $g_{a,b}(\bullet,t)$ for all $a,b$ only has simple roots, we get
\begin{equation}
\label{eq:simple}
\frac{\partial p_{a,b,k}}{\partial u}(u,re^{it})=r^{2k(s-1)}\frac{\partial g_{r^{q_{1}}a,r^{q_{2}}b}}{\partial u}\left(\frac{u}{r^{2k}},r,t\right)\neq 0.
\end{equation}
It follows from the Cauchy-Riemann equations that the Jacobian of $p_{a,b,k}$, again viewed as a map $\mathbb{R}^4\to\mathbb{R}^2$, has full rank at $(u,v)$ and hence the singular point at the origin is weakly isolated.

Since $p_{a,b,k}(u,1,t)=g_{a,b}(u,t)$ the same arguments as in the construction of $f_{a,b}$ apply to $p_{a,b,k}$ (cf. \cite{bode:2016polynomial}), meaning that as long as $a$ and $b$ are chosen small enough the zero level set of $p_{a,b,k}$ on the unit three-sphere is the closure of $B$. Furthermore, if $a$ and $b$ are small enough, the intersection of the zero level set of $p_{a,b,k}$ and the unit three-sphere is transverse.

%maybe rewrite with explicit derivatives
In order to guarantee that the zero level set of $p_{a,b,k}$ on the three-sphere of any radius is the closure of $B$ and not just on the unit three-sphere, it suffices to show that for all $\rho\in(0,1]$ and all $x\in p_{a,b,k}^{-1}(0)\cap S^{3}_{\rho}$ the Jacobian on the three-sphere $\nabla_{S^{3}_{\rho}}p_{a,b,k}(x)$ has full rank. Since for every $v\in\mathbb{C}\backslash\{0\}$ the roots of the complex polynomial $p_{a,b,k}(\bullet,v)$ are simple, it is sufficient to show that for all $\rho\in(0,1]$ the intersection of $S^{3}_{\rho}$ with $p^{-1}_{a,b,k}(0)$ is transverse. Note that for every $t\in[0,2\pi]$ the roots of $p_{a,b,k}(\bullet,re^{it})$ are given by $r^{2k+q_{1}}a \mathrm{Re}(u_{j}(t))+ir^{2k+q_{2}}b \mathrm{Im}(u_{j}(t))$, $j=1,2,\ldots,s$, where $u_{j}(t)$ is the $j$th root of $g_{1,1}(\bullet,t)$.

Let $(u,v)$ be a point on $p_{a,b,k}^{-1}(0)$ that is not the origin, then $v=re^{it}\neq 0$. If $u=0$, then there is a $j\in\{1,2,\ldots,s\}$ such that $u_{j}(t)=0$ and then $r^{2k+q_{1}}a\mathrm{Re}(u_{j}(t))+ir^{2k+q_{2}}b \mathrm{Im}(u_{j}(t))=0$ for all $r$. Thus in the basis $(\mathrm{Re}(u),\mathrm{Im}(u),r,t)$ the vector $(0,0,1,0)$ is tangent to $p_{a,b,k}^{-1}(0)$ at $(u,v)$ and the intersection of $p_{a,b,k}^{-1}(0)$ with $S^3_{|(u,v)|}$ is transverse at $(u,v)$.

If $u\neq0$, then one tangent vector of $p_{a,b,k}^{-1}(0)$ at $(u,v)$ in the basis $(|u|,\arg(u),r,t)$ is 
\begin{equation}
\label{eq:tangent}
(2kr^{2k-1}|U|+r^{2k}(2q_{1}r^{2q_{1}-1}a^2 \mathrm{Re}(u_{j}(t))^2+2q_{2}r^{2q_{2}-1}b^2 \mathrm{Im}(u_{j}(t))^2)/|U|,0,1,0),
\end{equation}
where $|U|=\sqrt{r^{2q_{1}}a^2 \mathrm{Re}(u_{j}(t))^2+r^{2q_{2}}b^2 \mathrm{Im}(u_{j}(t))^2}$.
The vector in Equation (\ref{eq:tangent}) is for small enough $a$ and $b$ not in the tangent space of $S^{3}_{|(u,v)|}$ (which is for example spanned by $(0,1,0,0)$, $(0,0,0,1)$ and $(-r/|u|,0,1,0)$). Hence the intersection of $p_{a,b,k}^{-1}(0)$ and $S^3_{|(u,v)|}$ is transverse at $(u,v)$.

Therefore all intersections of $p_{a,b,k}^{-1}(0)$ with $S^3_{\rho}$ are transverse for all $\rho\in(0,1]$.
%they depend smoothly on the coefficients, in particular they depend smoothly on $r=|v|$. It follows that for every $\rho'\in(0,1)$ there is a $l\in\mathbb{N}$ such that for all $\rho\in(0,\rho']$ and all $k\geq l$, $k\in\mathbb{N}$ the intersection $p_{a,b,k}^{-1}(0)\cap S^{3}_{\rho}$ is transverse. Similarly (and analogously to the argument in the construction of $f_{a,b}$ in \cite{bode:2016polynomial}) for every $\rho''\in(0,1]$ and every $k\in\mathbb{N}$ there is a $\varepsilon>0$ such that for all $\lambda\in(0,\varepsilon)$ the intersection $p_{\lambda a,\lambda b,k}^{-1}(0)\cap S^{3}_{\rho}$ is transverse for all $\rho\in(\rho'',1]$. This implies that for large enough $k\in\mathbb{N}$, small enough $\lambda>0$, $\rho\in(0,1]$ and all $x\in p_{\lambda a, \lambda b, k}^{-1}(0)\cap S^{3}_{\rho}$ the Jacobian $\nabla_{S^{3}_{\rho}}p_{\lambda a,\lambda b, k}(x)$ has full rank if and only if $\nabla_{\mathbb{R}^{4}} p_{\lambda a, \lambda b, k}(x)$ has full rank, which it has by Equation (\ref{eq:simple}).

We have thus shown that as the radius $\rho$ of the three-sphere $S^{3}_{\rho}$ varies between zero and one the link type of $p_{\lambda a, \lambda b, k}^{-1}(0)\cap S^{3}_{\rho}$ does not change if $\lambda$ is small enough and $k$ is large enough. Hence for sufficient choices of $a$, $b$ and $k$ the zero level set of $p_{a,b,k}$ on the three-sphere of any radius at most one is the closure of $B$, which finishes the proof.
\end{proof}

Lemma \ref{general} is in general not constructing functions of the form discussed by Akbulut and King. The newly defined function $p_{a,b,k}$ is a polynomial in $u$, but it might not be possible to write it as a polynomial in $v$ and $\overline{v}$. %Furthermore, it is not immediately obvious that for a given braid a function $p_{a,b,k}$ with the above properties exists that would result in a semiholomorphic polynomial. %The semiholomorphic polynomial $f_{a,b}$ automatically satisfies the first two conditions listed in Lemma \ref{general}, but in general it does not satisfy the condition on the simplicity of the roots. 
%One function that one might consider instead is
%ADDED 'in general'

If we set $q_{1}=q_{2}=0$, the resulting function
\begin{equation}
\label{eq:p}
p_{a,b,k}(u,r,t)=r^{2sk}g_{a,b}\left(\frac{u}{r^{2k}},t\right)=r^{2sk}f_{a,b}\left(\frac{u}{r^{2k}},e^{it}\right)
\end{equation}
only depends on $r$ by the scaling in the $u$-coordinate and the overall factor $r^{2sk}$. Hence the next lemma follows from Lemma \ref{general} with $q_{1}=q_{2}=0$.

\begin{lemma}
\label{weakest}
%Let $p_{a,b}$ be as in Equation (\ref{eq:p}). Then f
For large enough $k$ and small enough $a,b>0$ 
\begin{equation}
\label{eq:tildep}
p_{a,b,k}(u,v):=(v\overline{v})^{sk}f_{a,b}\left(\frac{u}{(v\overline{v})^k},\frac{v}{\sqrt{v\overline{v}}}\right),
\end{equation}
has a weakly isolated singular point at the origin and $p_{a,b,k}^{-1}(0)\cap S^{3}_{\rho}$ is the closure of $B$ for all $\rho\in(0,1]$.
\end{lemma}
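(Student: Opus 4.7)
The plan is to obtain Lemma \ref{weakest} as a direct corollary of Lemma \ref{general} with the specific choice $q_{1}=q_{2}=0$, observing that in this degenerate case the constructed function $p_{a,b,k}$ of Lemma \ref{general} can be re-expressed intrinsically as a function of the complex variables $u$ and $v$ alone, matching Equation (\ref{eq:tildep}).

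First I would invoke Lemma \ref{general} with $q_{1}=q_{2}=0$, which produces the map $P_{a,b,k}(u,r,t) = r^{2sk}g_{a,b}(u/r^{2k},t) = r^{2sk}f_{a,b}(u/r^{2k},e^{\rmi t})$ appearing in Equation (\ref{eq:p}); the lemma then guarantees that the associated $p_{a,b,k}(u,re^{\rmi t}) := P_{a,b,k}(u,r,t)$ has a weakly isolated singularity at the origin and that $p_{a,b,k}^{-1}(0)\cap S^{3}_{\rho}$ is the closure of $B$ for every $\rho\in(0,1]$, provided $k$ is large enough and $a,b>0$ are small enough.

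Next I would verify that this same $p_{a,b,k}$ is given on $\mathbb{C}\times(\mathbb{C}\setminus\{0\})$ by the formula on the right-hand side of Equation (\ref{eq:tildep}). Writing $v = re^{\rmi t}$ one has $v\overline{v} = r^{2}$, so $(v\overline{v})^{sk}=r^{2sk}$, $(v\overline{v})^{k}=r^{2k}$, and $v/\sqrt{v\overline{v}} = e^{\rmi t}$ for $v\neq 0$. Inserting these identities into Equation (\ref{eq:p}) reproduces Equation (\ref{eq:tildep}) exactly. At $v=0$ the right-hand side of (\ref{eq:tildep}) is defined by continuous extension to the value $u^{s}$, in agreement with the identity $P_{a,b,k}(u,0,t)=u^{s}$ used already in Lemma \ref{general}; this identity follows because the leading term of $g_{a,b}(\cdot,t)$ in $u$ is $u^{s}$, so the prefactor $r^{2sk}$ cancels exactly the singular scaling $u\mapsto u/r^{2k}$ in the leading term while every lower order monomial is suppressed by a positive power of $r$.

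Once the pointwise coincidence of the two expressions is established, every conclusion of Lemma \ref{general}, namely weak isolation at the origin and the correct link type on every three-sphere of radius at most one, transfers verbatim to the function defined by Equation (\ref{eq:tildep}). I do not anticipate a serious obstacle beyond the bookkeeping of this change of variables; the one point that requires a moment's thought is continuity of $p_{a,b,k}$ at $v=0$, which is controlled by the leading-term analysis just described and needs no new input beyond what is already present in the proof of Lemma \ref{general}.
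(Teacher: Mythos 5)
Your proposal is correct and follows essentially the same route as the paper, which also obtains Lemma \ref{weakest} directly from Lemma \ref{general} with $q_{1}=q_{2}=0$ via the identification in Equation (\ref{eq:p}) under the substitution $v=re^{\rmi t}$. The only point the paper leaves implicit and you spell out explicitly is the continuous extension to $v=0$ via $P_{a,b,k}(u,0,t)=u^{s}$, which is already part of the statement of Lemma \ref{general}.
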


%ADDED example of 4,3,2

As an example we consider the braid parametrised by
\begin{equation}
\label{eq:423}
\left(a\cos\left(\frac{2t+2\pi j}{4}\right),b\sin\left(\frac{3(2t+2\pi j)}{s}\right),t\right)\quad\quad t\in [0,2\pi],\ j=1,2,3,4
\end{equation}
with $a,b>0$. Its closure is the 2-component link $L_{6a1}$.
Defining $g_{a,b}$ as in Equation (\ref{eq:gab}) and expanding the product yields
\begin{align}
\label{423exp}
g_{a,b}(u,t)=&u^4+u^2(b^2-a^2-2iab\sin(2t))+\frac{1}{8}(a^4-2a^2b^2+b^4 \nonumber\\
&-a^2(a^2+6b^2)\cos(2t)+2a^2b^2\cos(4t)-b^4\cos(6t)+4ia^3b\sin(2t)\nonumber\\
&-4iab^3(\sin(2t)+\sin(4t))).
\end{align}
Then using de Moivre's identities $\sin(nt)=1/(2i)(e^{int}-e^{-int})$ and $\cos(nt)=1/2 (e^{int}+e^{-int})$ % and writing $v/\sqrt{v\overline{v}}$ for $e^{it}$ and $\overline{v}/\sqrt{v\overline{v}}$ for $e^{-it}$ 
we get with $k=1$,
\begin{align}
\label{423subs}
P_{a,b,1}(u,r,t)&=r^{8}g_{r^{q_{1}}a,r^{q_{2}}b}\left(\frac{u}{r^{2}},t\right)\nonumber\\
&=u^4+u^2r^4(r^{2q_{2}}b^2-r^{2q_{1}}a^2-ab(e^{2it}-e^{-2it}))+\frac{r^8}{16}(2a^4r^{4q_{1}}-4a^2b^2r^{2q_{1}+2q_{2}}\nonumber\\
&+2b^4r^{4q_{2}}-a^2r^{2q_{1}}(a^2r^{2q_{1}}+6b^2r^{2q_{2}})(e^{2it}+e^{-2it})\nonumber\\
&+2a^2b^2r^{2q_{1}+2q_{2}}(e^{4it}+e^{-4it})-b^4r^{4q_{2}}(e^{6it}+e^{-6it})+4a^3br^{3q_{1}+q_{2}}(e^{2it}-e^{-2it})\nonumber\\
&-4ab^3r^{q_{1}+3q_{2}}(e^{2it}-e^{-2it}+e^{4it}-e^{-4it})).
\end{align}

We set $r=\sqrt{v\overline{v}}$, $e^{it}=v/\sqrt{v\overline{v}}$, $e^{-it}=v/\sqrt{v\overline{v}}$ and $q_{1}=q_{2}=0$ and obtain
\begin{align}
\label{eq:423subss}
p_{a,b,1}(u,v)&=u^4+u^2((v\overline{v})^{2}(b^2-a^2)-abv\overline{v}(v^2-\overline{v}^2))\nonumber\\
&+\frac{(v\overline{v})^4}{16}(2a^4-4a^2b^2+2b^4)+\frac{1}{16}((-a^4-6a^2b^2)(v\overline{v})^3(v^2+\overline{v}^2)\nonumber\\
&+2a^2b^2(v\overline{v})^2(v^4+\overline{v}^4)-b^4v\overline{v}(v^6+\overline{v}^6)+4a^3b(v\overline{v})^3
(v^2-\overline{v}^2)\nonumber\\
&-4ab^3((v\overline{v})^3(v^2-\overline{v}^2)+(v\overline{v})^2(v^4-\overline{v}^4))).
\end{align}
This function is easily checked to have a weakly isolated singularity at the origin. By the previous lemmas the link of the singularity is the closure of the braid parametrised by Equation (\ref{eq:423}) if $a$ and $b$ are small enough.

Note that in this example we obtain a polynomial in $u$, $v$ and $\overline{v}$. The next section discusses for which braid parametrisations this happens.

\section{Braid parametrisations leading to polynomial maps} 
\label{polynomial}
The construction described in Section \ref{weak} clearly works for any given braid and the resulting function $p_{a,b,k}$ is a polynomial in $u$, $v$, $\overline{v}$ and $\sqrt{v\overline{v}}$. It is in general not a polynomial map $\mathbb{R}^{4}\to\mathbb{R}^{2}$. In the following we investigate braid parametrisations that guarantee that $p_{a,b,k}$ is a polynomial in $u$, $v$ and $\overline{v}$ and thus of the form discussed by Akbulut and King \cite{ak:1981all}.

Let $B=w^2$ be the square of some braid word $w\in B_{s}$. Then we can find a finite Fourier parametrisation of $w$ and define the corresponding braid polynomial $g_{a,b}(u,t)$ that has $w$ as its zero level set. Making a simple change of variable results in the function $\tilde{g}_{a,b}(u,t)=g_{a,b}(u,2t)$ whose zero level set is $B=w^2$. Moreover, since $g_{a,b}$ is a polynomial in $u$, $e^{it}$ and $e^{-it}$, $\tilde{g}_{a,b}$ is a polynomial in $u$, $e^{i2t}$ and $e^{-i2t}$. Let $\tilde{f}_{a,b}$ be the semiholomorphic polynomial that results from $\tilde{g}_{a,b}$ by replacing every instance of $e^{it}$ by $v$ and every instance of $e^{-it}$ by $\overline{v}$. Then all exponents of $v$ and $\overline{v}$ in $\tilde{f}_{a,b}$ are even. We define $p_{a,b,k}$ as in Lemma \ref{weakest} using $\tilde{f}_{a,b}$. It is by construction a polynomial in $u$, $v$, $\overline{v}$ and $\sqrt{v\overline{v}}$. Since all exponents of $v$ and $\overline{v}$ of $\tilde{f}_{a,b}$ are even, so are all exponents of $\sqrt{v\overline{v}}$ and hence $p_{a,b,k}$ is a polynomial in $u$, $v$ and $\overline{v}$. In combination with Lemma \ref{weakest} this proves the following lemma.

\begin{lemma}
\label{evenweak}
Let $B=w^2$ be the square of some braid word $w\in B_{s}$. Then there exists a function $F:\mathbb{R}^{4}\to\mathbb{R}^{2}$ such that 
\begin{itemize}
\item $F$ is a polynomial in $u$, $v$ and $\overline{v}$,
\item $F$ has a weakly isolated singular point at the origin,
\item $F^{-1}(0)\cap S^{3}_{\rho}$ is the closure of $B$ for all $\rho\in(0,1]$,
\item $\deg_{u}F=s$.
\end{itemize}
\end{lemma}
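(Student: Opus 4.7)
The plan is to use the squaring hypothesis $B = w^2$ to force all powers of the phase $e^{\pm it}$ in the braid polynomial to be even, which is precisely the condition under which the output of the Lemma \ref{weakest} construction ceases to involve $\sqrt{v\overline{v}}$ and becomes a genuine polynomial in $v$ and $\overline{v}$.

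First I would choose a finite Fourier parametrisation of $w$ as in Equation (\ref{eq:fourier}) and form the braid polynomial $g_{a,b}(u,t) \in \mathbb{C}[u, e^{it}, e^{-it}]$ of Equation (\ref{eq:gab}). Substituting $t \mapsto 2t$ yields $\tilde{g}_{a,b}(u,t) := g_{a,b}(u, 2t)$, whose roots trace out two consecutive copies of $w$ as $t$ runs over $[0, 2\pi]$ and thus parametrise $B = w^2$; moreover $\tilde{g}_{a,b}$ lies in $\mathbb{C}[u, e^{2it}, e^{-2it}]$. Replacing $e^{it}$ by $v$ and $e^{-it}$ by $\overline{v}$ gives a semiholomorphic polynomial $\tilde{f}_{a,b}(u,v)$ in which every monomial carries even powers of both $v$ and $\overline{v}$. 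I would then feed $\tilde{f}_{a,b}$ into the construction of Lemma \ref{weakest} to define
\[
F(u,v) := (v\overline{v})^{sk}\,\tilde{f}_{a,b}\!\left(\frac{u}{(v\overline{v})^k},\,\frac{v}{\sqrt{v\overline{v}}}\right),
\]
for suitable small $a,b > 0$ and $k$ at least $\deg \tilde{f}_{a,b}/(2s)$.

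The main — and essentially only — step requiring verification is that $F$ is genuinely a polynomial in $u$, $v$ and $\overline{v}$, which is exactly where the parity of the exponents is used. A monomial $c\, u^i v^j \overline{v}^l$ of $\tilde{f}_{a,b}$, with $j$ and $l$ even, is sent after substitution to $c\, u^i\, v^{(s-i)k + (j-l)/2}\, \overline{v}^{(s-i)k + (l-j)/2}$; evenness of $j$ and $l$ makes $(j-l)/2$ an integer so no half-power of $v\overline{v}$ survives, while the degree condition on $k$ ensures both exponents are non-negative. Hence $F \in \mathbb{C}[u, v, \overline{v}]$. The remaining conclusions — weak isolation at the origin, $F^{-1}(0) \cap S^3_\rho$ equal to the closure of $B$ for every $\rho \in (0,1]$, and $\deg_u F = s$ — are then inherited directly from Lemma \ref{weakest} applied to $\tilde{f}_{a,b}$, since that lemma's conclusions do not depend on any additional structural feature of its input beyond what is required for the Section \ref{review} construction to produce the link.
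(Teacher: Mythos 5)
Your proposal is correct and follows essentially the same route as the paper: reparametrise $w$ via $t\mapsto 2t$ to get $\tilde{g}_{a,b}\in\mathbb{C}[u,e^{2it},e^{-2it}]$, pass to $\tilde{f}_{a,b}$ with only even exponents of $v$ and $\overline{v}$, and feed it into the Lemma \ref{weakest} construction, where the parity of the exponents kills the half-powers of $v\overline{v}$. Your explicit monomial computation $c\,u^i v^j\overline{v}^l \mapsto c\,u^i v^{(s-i)k+(j-l)/2}\overline{v}^{(s-i)k+(l-j)/2}$ just spells out the parity/positivity check the paper states more briefly, and the remaining properties are inherited from Lemma \ref{weakest} exactly as in the paper's proof.
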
 
\begin{proof}
Take $F$ to be $p_{a,b,k}$ for small enough $a,b$ and large enough $k$. Then the first three properties are shown above or follow directly from Lemma \ref{weakest}. The fourth property follows directly from the construction and $\deg_{u}F=\deg_{u}\tilde{f}_{a,b}=\deg_{u}\tilde{g}_{a,b}=\deg_{u}g_{a,b}=s$.
\end{proof}

%\begin{lemma}
%\label{weakest} For small enough $a$ and $b$ and large enough $k$ the function $F_{a,b,k}$ is a polynomial in $u$, $v$, $\overline{v}$ and $\sqrt{v\overline{v}}$ with a weakly isolated singular point at the origin and $F_{a,b,k}^{-1}(0)\cap S^{3}_{\rho}$ is the closure of $B$ for all $\rho\in(0,1]$.
%\end{lemma}
%\begin{proof} 

%\end{proof}

More generally, $p_{a,b,k}$ is a polynomial in $u$, $v$ and $\overline{v}$ if all exponents of $v$ and $\overline{v}$ in the corresponding polynomial $f_{a,b}$ are even or equivalently if the corresponding braid polynomial $g_{a,b}$ is a polynomial in $u$, $e^{2it}$ and $e^{-2it}$. In order to construct polynomial maps $\mathbb{R}^{4}\to\mathbb{R}^{2}$ with a weakly isolated singular point it is hence sufficient to find braid parametrisations as in Equation (\ref{eq:fourier}) that lead to a function $g_{a,b}$ of this form.

Lemma \ref{evenweak} covers an obvious case of braids that lead to a polynomial $f_{a,b}$ where all exponents of $v$ and $\overline{v}$ are even, so that $p_{a,b,k}$ is a polynomial in $u$, $v$ and $\overline{v}$. It is not the only way to achieve this though.

\begin{lemma}
\label{odd}
Let $B$ be a braid such that every link component $C$ of the closure of $B$ consists of the same number of strands $s_{C}$. Furthermore, let $2^m$ be the highest power of two dividing $s_{C}$. Let $B$ be parametrised as in Equations (\ref{eq:fourier}) and (\ref{eq:trigpoly}) satisfying
\begin{enumerate}[label=(\roman*)]
\item all $j\in\{0,1,\ldots,N_{C}\}$ with non-vanishing $a_{C,j}$ for some link component $C\in\mathcal{C}$  lie in the same residue class mod $2^{m+1}$, say $x\ \mathrm{mod}\ 2^{m+1}$
\label{con1}
\item all $j\in\{0,1,\ldots,M_{C}\}$ with non-vanishing $b_{C,j}$ for some link component $C\in\mathcal{C}$  lie in the same residue class mod $2^{m+1}$, say $y\ \mathrm{mod}\  2^{m+1}$.
\label{con2}
\end{enumerate}

Then $p_{a,b,k}$ with $q_{1}=x/2^m$ and $q_{2}=y/2^m$ as in Lemma \ref{weakest} is a polynomial in $u$, $v$ and $\overline{v}$.
\end{lemma}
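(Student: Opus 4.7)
The plan is to verify, for every monomial $u^{\alpha} r^{A} e^{int}$ appearing in the expansion of $P_{a,b,k}(u,r,t)$, the two conditions that permit its rewriting as $u^{\alpha} v^{(A+n)/2} \overline{v}^{(A-n)/2}$: namely that $A$ is a non-negative integer with $A \geq |n|$ and $A \equiv n \pmod{2}$. The magnitude condition will be handled by taking $k$ large; the parity condition is the substantive part and must be forced by hypotheses \ref{con1} and \ref{con2}.

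First I would decompose $g_{a,b} = \prod_{C \in \mathcal{C}} h_{C}$ according to link components, so the analysis reduces to a single component $C$ of $s_{0}$ strands. Expanding one factor $h_{C}$ over strand assignments---at each $j \in \{1, \ldots, s_{0}\}$ one picks either $u$, a specific Fourier mode $a_{C,\ell_{j}} e^{i\ell_{j}(\tau + 2\pi j/s_{0})}$ from $-aF_{C}$, or a specific Fourier mode $b_{C,m_{j}} e^{im_{j}(\tau + 2\pi j/s_{0})}$ from $-ibG_{C}$ (with $\tau = t/s_{0}$)---a typical contribution has shape $u^{\alpha_{C}} a^{\beta_{C}} b^{\gamma_{C}} e^{iN_{C}\tau}$ times a pure phase, with $N_{C} = \sum_{j\in J_{X}} \ell_{j} + \sum_{j\in J_{Y}} m_{j}$. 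Hypotheses \ref{con1} and \ref{con2} force each nonzero $\ell_{j} \equiv x$ and each nonzero $m_{j} \equiv y \pmod{2^{m+1}}$, so $N_{C} \equiv \beta_{C} x + \gamma_{C} y \pmod{2^{m+1}}$. Because $h_{C}$ is a polynomial in $e^{it} = e^{is_{0}\tau}$ (cf.\ Section \ref{review}), only terms with $s_{0} \mid N_{C}$ survive, giving an integer $t$-frequency $n_{t,C} = N_{C}/s_{0}$.

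Writing $s_{0} = 2^{m} q$ with $q$ odd, the congruence $s_{0} n_{t,C} \equiv \beta_{C} x + \gamma_{C} y \pmod{2^{m+1}}$ has two consequences. Reducing modulo $2^{m}$ gives $2^{m} \mid \beta_{C} x + \gamma_{C} y$, so $q_{1} \beta_{C} + q_{2} \gamma_{C} = (\beta_{C} x + \gamma_{C} y)/2^{m}$ is a non-negative integer; dividing the congruence itself by $2^{m}$ and exploiting that $q$ is odd gives $n_{t,C} \equiv (\beta_{C} x + \gamma_{C} y)/2^{m} = q_{1} \beta_{C} + q_{2} \gamma_{C} \pmod{2}$. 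Summing over components yields the key parity identity $n_{t} \equiv q_{1} \beta + q_{2} \gamma \pmod{2}$. On the other hand, Equation (\ref{eq:firstscaling}) rescales each monomial to $u^{\alpha} a^{\beta} b^{\gamma} r^{A} e^{in_{t}t}$ with $A = 2k(\beta + \gamma) + q_{1} \beta + q_{2} \gamma$, hence $A \equiv q_{1} \beta + q_{2} \gamma \equiv n_{t} \pmod{2}$ automatically. For the magnitude condition, the triangle-inequality bound $|n_{t,C}| \leq (\beta_{C} N_{C} + \gamma_{C} M_{C})/s_{0}$ together with $A \geq 2k(\beta + \gamma)$ shows $A \geq |n_{t}|$ as soon as $k$ exceeds $\max_{C} \max(N_{C}, M_{C})/(2 s_{0})$, which is compatible with the lower bound already imposed in Lemma \ref{general}.

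The main subtlety I anticipate is the \emph{per-component} nature of the divisibility $2^{m} \mid \beta_{C} x + \gamma_{C} y$: one must verify this for each $C$ separately, since the fractional factors $r^{q_{1} \beta_{C}}$ and $r^{q_{2} \gamma_{C}}$ arise component-by-component and only the global product $\prod_{C} h_{C}$ is rescaled uniformly. This is enabled by the product decomposition of $g_{a,b}$, which decouples the surviving-term analysis across components, combined with the odd-factor trick $s_{0} = 2^{m} q$ that converts the mod-$2^{m+1}$ congruence into a clean mod-$2$ statement about $n_{t,C}$.
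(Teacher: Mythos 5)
Your proposal is correct and follows essentially the same route as the paper: expand $g_{a,b}$ while tracking the $a$- and $b$-degrees and the $e^{it}$-frequency of each surviving monomial, use $2\pi$-periodicity to force that frequency to be an integer, and combine the congruence modulo $2^{m+1}$ supplied by hypotheses (i)--(ii) with the factorisation $s_{C}=2^{m}\cdot(\mathrm{odd})$ to conclude that the exponent of $\sqrt{v\overline{v}}$ is even, with $k$ large handling nonnegativity. Your per-component decomposition and explicit magnitude bound are harmless refinements (only the global counts $\beta,\gamma$ and the total frequency enter the rescaling $r^{q_{1}\beta+q_{2}\gamma}$), and your identification of the picked Fourier modes with the residues $x$ and $y$ treats the negative-frequency modes in exactly the same tacit way as the paper's own proof does.
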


\begin{proof}

We can write the polynomial $g_{a,b}$ as 
\begin{equation}
\label{eq:gpoly}
g_{a,b}(u,t)=\sum_{i=0}^{s}u^{i}\sum_{j+n=s-i}a^{j}b^{n}\sum'c_{i,j,n, \mathrm{pair}}e^{it\sum''(j'+n')/s_{C}},
\end{equation}
where $\sum'$ is the sum over pairs of tuples, one $j$-tuple of $j'\in\{0,1,\ldots,N_{C}\}$ with non-vanishing $a_{C,j'}$ for some $C\in\mathcal{C}$ and one $n$-tuple of $n'\in\{0,1,\ldots,M_{C}\}$ with non-vanishing $b_{C,n'}$ for some $C\in\mathcal{C}$ and for each such pair of such tuples $\sum''$ is taken to be the sum over all entries $j'$ of the $j$-tuple plus the sum over all entries $n'$ of the $n$-tuple. The coefficients $c_{i,j,n,\mathrm{pair}}$ are complex numbers depending on the values of $i$, $j$, $n$ and the pair of tuples.

The conditions \ref{con1} and \ref{con2} in Lemma \ref{odd} imply that Equation (\ref{eq:gpoly}) results in
\begin{equation}
g_{a,b}(u,t)=\sum_{i=0}^{s}u^{i}\sum_{j+n=s-i}a^{j}b^{n}\sum'c_{i,jn, \mathrm{pair}}e^{it(jx+ny+2^{r+1}m_{\mathrm{pair}})/s_{C}},
\end{equation}
where $m_{\mathrm{pair}}$ is an integer depending on the pair of tuples.

With this equality and $q_{1}=x/2^m$, $q_{2}=y/2^m$ the map $p_{a,b,k}$ becomes 
%\begin{equation}
%p_{a,b,k}(u,v)=(v\overline{v})^{sk}\sum_{i=0}^{s}(u/(v\overline{v})^{k})^{i}\sum_{j+k=s-i}a^{j}b^{k}\sum'c_{i,j,k}(v/\sqrt{v\overline{v}})^{(jx+ky+2^{m+1}m_{p})/s_{C}}
%\end{equation}
%and thus
\begin{align}
\label{eq:ftilde}
p_{a,b,k}(u,v)&=(v\overline{v})^{sk}g_{r^{x/2^m}a,r^{y/2^m}b}\left(\frac{u}{(v\overline{v})^k},t\right)\nonumber \\ &=(v\overline{v})^{sk}\sum_{i=0}^{s}\left(\frac{u}{(v\overline{v})^{k}}\right)^{i}\sum_{j+n=s-i}\sqrt{v\overline{v}}^{\frac{xj+ny}{2^{m}}}a^j b^n \sum'c_{i,j,n,\mathrm{pair}}\left(\frac{v}{\sqrt{v\overline{v}}}\right)^{\frac{jx+ny+2^{m+1}m_{\mathrm{pair}}}{s_{C}}}.
\end{align}

What we need to show is that for every $i$, $j$, $n$ and $m_{\mathrm{pair}}$ that can appear in this expression with a non-zero coefficient the exponent of $\sqrt{v\overline{v}}$ is even and nonnegative, so that one obtains a polynomial in $u$, $v$ and $\overline{v}$. We know that $g_{a,b}$ is a polynomial in $u$, $e^{it}$ and $e^{-it}$, so for all terms that have a non-vanishing coefficient $c_{i,j,n,\mathrm{pair}}$, the exponent of $e^{it}$ given by $(jx+ny+2^{m+1}m_{\mathrm{pair}})/s_{C}$ is an integer.
This means that $(jx+ny+2^{m+1}m_{\mathrm{pair}})$ is a multiple of $s_{C}$ and hence divisible by $2^{m}$, which is by definition a divisor of $s_{C}$. It follows that $jx+ny$ is a multiple of $2^{m}$.

Now consider a monomial of $p_{a,b,k}$. The exponent of $\sqrt{v\overline{v}}$ in a monomial is given by $(jx+ny)/2^m-(jx+ny+2^{m+1}m_{\mathrm{pair}})/s_{C}$. By the remark above this is an integer and hence $(jx+ny)s_{C}/2^m-(jx+ny+2^{m+1}m_{\mathrm{pair}})$ is a multiple of $s_{C}$. We find that
\begin{equation}
 (jx+ny)s_{C}/2^m-(jx+ny+2^{m+1}m_{\mathrm{pair}})=2^{m+1}m_{\mathrm{pair}}-(jx+ny)(s_{C}/2^m-1)\equiv 0\ \mathrm{mod}\  2^{m+1},
\end{equation}
since $jx+ny$ is a multiple of $2^m$ and $2^m$ is the highest power of two dividing $s_{C}$. Thus $s_{C}/2^m$ is odd and hence $(s_{C}/2^m-1)$ is even and $(jx+ny)(s_{C}/2^m-1)$ is divisible by $2^{m+1}$.

It follows that $(jx+ny)s_{C}/2^m-(jx+ny+2^{m+1}m_{\mathrm{pair}})$ is a multiple of $2s_{C}$ and hence the exponent of $\sqrt{v\overline{v}}$, which is $(jx+ny)/2^m-(jx+ny+2^{m+1}m_{\mathrm{pair}})/s_{C}$ is even.
\end{proof}

Note that by Lemma \ref{general} $p_{a,b,k}$ has a weakly isolated singularity at the origin with the closure of $B$ as the link of the singularity. Hence Lemma \ref{general} and Lemma \ref{odd} provide a way of constructing the maps that were shown to exist by Akbulut and King for closures of braids that allow certain parametrisations.

It follows from Lemma \ref{general} and Lemma \ref{odd} that all lemniscate links, which are defined and discussed in more detail in \cite{bode:2016lemniscate} and in Section \ref{lemniscatesection}, can be constructed as links of weakly isolated singularities of semiholomorphic polynomials $p_{a,b,k}:\mathbb{R}^4\to\mathbb{R}^2$.

We are not aware of a general procedure that decides if a given link is the closure of a braid that admits a parametrisation as in Lemma \ref{evenweak} or as in Lemma \ref{odd}. An obvious obstruction is that the the links from Lemma \ref{evenweak} are 2-periodic.

The braids of the form in Lemma \ref{odd} must in particular satisfy that all Fourier frequencies with non-zero coefficient in the parametrisation of the $x$-coordinate are in the same residue class $\ \mathrm{mod}\  2$ and all frequencies with non-zero coefficient in the parametrisation of the $y$-coordinate are in the same residue class $\ \mathrm{mod}\  2$. This makes each $F_{C}$ and $G_{C}$ into an even or an odd function. These symmetries in the braid parametrisation are reflected in symmetries of the corresponding braid words.

If both $x$ and $y$ are even, then it is a parametrisation as in Lemma \ref{evenweak} and the closure must be 2-periodic. 

If the trigonometric polynomials $F_{C}$ parametrising the $x$-coordinate have only odd frequencies and all $G_{C}$ only even ones (or equivalently the other way around), then the link is a closure of a braid of the form 
\begin{equation}
B=w\Delta_{s}w\Delta^{-1}_{s},
\end{equation} 
where 
\begin{equation}\Delta_{s}=(\sigma_{s-1}\sigma_{s-2}\ldots \sigma_{1})(\sigma_{s-1}\sigma_{s-2}\ldots\sigma_{2})\ldots(\sigma_{s-1}\sigma_{s-2})\sigma_{s-1}
\end{equation} 
is the Garside element and $w$ is some braid word on $s$ strands. This condition is equivalent to requiring that the second half of the braid word $B$ is exactly the same as the first half, but with all the indices mirrored, so \begin{equation}B=\sigma_{j_{1}}^{\epsilon_{1}}\sigma_{j_{2}}^{\epsilon_{2}}\ldots\sigma_{j_{l}}^{\epsilon_{l}}\ \ \sigma_{s-j_{1}}^{\epsilon_{1}}\sigma_{s-j_{2}}^{\epsilon_{2}}\ldots\sigma_{s-j_{l}}^{\epsilon_{l}}.
\end{equation}

If both $x$ and $y$ are odd and $s_{C}$, then the link is the closure of a braid $B$ of the form $B=w\overline{w}$ for some braid word $w$, where $\overline{w}$ is the word $w$ with all signs switched, i.e. $\overline{\sigma_{j_{1}}^{\epsilon_{1}}\ldots\sigma_{j_{l}}^{\epsilon_{l}}}=\sigma_{j_{1}}^{-\epsilon_{1}}\ldots\sigma_{j_{l}}^{-\epsilon_{l}}$.
It follows that every closure of such a braid must be strongly positive amphicheiral.

If $x$ and $y$ are odd, but $s_{C}$ is even, the symmetry in the braid word is more complicated and so far we have not found a precise description. The difficulty arises because the symmetry of the parametrisation implies that a crossing at a given value of $t$ of $x$ leads to another crossing at $t$ and $-x$ (rather than $t+\pi$ as in the previous case). There is now a multitude of different cases, depending on for which values of $t$ there are strands that cross at $x=0$ (and thus do not necessarily induce an extra crossing) and how many of these strands there are. Note that several crossings could occur simultaneously at a given value of $t$ at $x=0$. This makes a description of the symmetry in terms of braid words harder than in the previous cases.

Although we are not aware of any concrete examples, we expect that not every braid with a braid word of one of the forms above satisfies the conditions in Lemma \ref{odd}. Again we are not aware of an algorithm that determines whether a given link satisfies one of these properties.  

Lemmas \ref{evenweak} and \ref{odd} allow us to construct polynomial maps $\mathbb{R}^{4}\to\mathbb{R}^{2}$ whose vanishing set on the three-sphere of any radius is a certain link.
As mentioned in the introduction the initial motivation for us to construct knotted vanishing sets of polynomials came from physics. A polynomial whose nodal set on a three-sphere is a given link can be used as an initial configuration in a variety of physical systems. %The construction of such initial conditions for any given link allows us to study the dynamics of the link as the field evolves according to some differential equation or energy functional. 
We would like to point out that the polynomials constructed in this section can be used in this way when they are restricted to a three-sphere of radius $\rho$ . %We thus obtain a whole family of knotted scalar fields that can be implemented in several physical systems.

The constructed polynomials have a weakly isolated singular point at the origin. In both Lemmas \ref{evenweak} and \ref{odd} the resulting polynomials are semiholomorphic and its degree with respect to the complex variable $u$ is equal to the number of strands $s$ used in the construction.

In the following we investigate if some of the constructed polynomials have in fact an isolated singular point rather than only a weakly isolated one.

\section{Real algebraic links}
\label{real}

Sections \ref{weak} and Section \ref{polynomial} describe an explicit construction of real polynomial maps with weakly isolated singularities, namely $p_{a,b,k}$. It is a natural question if in some cases the functions constructed in this manner have in fact an isolated singularity rather than only a weakly isolated singularity. A link $L$ for which this is the case is then by definition real algebraic.

\subsection{The proof of Theorem \ref{even}}

Since the real algebraic links are a (conjecturally not proper) subset of the fibered links, it is clear that in general the singular point of $p_{a,b,k}$ is not isolated. One natural family of links to consider here consists of the links which are closures of braids that come with a natural fibration. Consider a braid parametrised as in Equation (\ref{eq:para}) and such that $g_{a,b}$ does not have any argument-critical points, i.e. for all $t\in[0,2\pi]$ and $u\in\mathbb{C}$ we have $\nabla_{\mathbb{R}^2\times[0,2\pi]}\arg(g_{a,b})\neq (0,0,0)$. Then $\arg(g_{a,b})$ extends to a fibration map from $S^{3}\backslash L$ to $S^{1}$, where $L$ is the closure of $B$. Hence $L$ is fibered. 
It turns out that a parametrisation of this form is almost sufficient to allow a construction of $L$ as the link of an isolated real singularity.

%We now turn to the function $F_{a,b,k}$ from Section \ref{weak}, which in general is not a polynomial in $u$, $v$ and $\overline{v}$. In some cases however, depending on the braid parametrisation used to define $g_{a,b}$ and subsequently $f_{a,b}$ and $F_{a,b,k}$, it is. If this happens, then by Lemma \ref{weakest} this polynomial $F_{a,b,k}$ satisfies all properties from Theorem \ref{akbulut}. The goal for the rest of this section is to find sufficient conditions on the braid parametrisation that $F_{a,b,k}$ is not only a polynomial in $u$, $v$ and $\overline{v}$ but also has an isolated singularity at the origin, rather than only a weakly isolated singularity. It then satisfies the properties listed in Theorem \ref{even}.

%One particular case where $F_{a,b,k}$ ends up being a polynomial in $u$, $v$ and $\overline{v}$ is if all exponents of $v$ and $\overline{v}$ are even. Note that this happens if in the braid parametrisation (\ref{eq:para}) all $j\in\{0,1,\ldots, N\}$ with non-vanishing $a_{j}$ and all $j\in{0,1,\ldots,M}$ with non-vanishing $b_{j}$ are even. 

\begin{definition}A braid on $s$ strands is called \textit{strictly homogeneous} if it can be written as a word $\sigma_{i_{1}}^{\varepsilon_{1}}\sigma_{i_{2}}^{\varepsilon_{2}}\ldots\sigma_{1_{l}}^{\varepsilon_{l}}$ in Artin generators $\sigma_{i}$ such that for all $i$ there is a $j\in\{1,2,\ldots,l\}$ with $i_{j}=i$ and if $i_{j}=i_{k}$, then $\varepsilon_{j}=\varepsilon_{k}$.
\end{definition}  

In other words, a strictly homogeneous braid has a word presentation where each generator $\sigma_{i}$ appears in the braid word if and only if $\sigma_{i}^{-1}$ does not. 
In order to show Theorem \ref{even} we make use of a lemma shown in \cite{bode:2016polynomial}.

\begin{lemma}
\label{crit}
(cf. \cite{bode:2016polynomial})
Let $w$ be a strictly homogeneous braid. Then there exists a finite Fourier parametrisation of $w$ as in Equation (\ref{eq:fourier}) such that the resulting $g_{a,b}$ does not have any argument-critical points, i.e.\ for all $(u,t)\in\mathbb{C}\times[0,2\pi]$ the Jacobian $\nabla_{\mathbb{R}^{3}}arg(g_{a,b,})(u,t)\neq (0,0,0)$.
\end{lemma}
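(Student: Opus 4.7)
The plan is to split the non-vanishing of $\nabla_{\mathbb{R}^{3}}\arg g_{a,b}$ into two parts and then arrange the Fourier parametrisation of $w$ so that both can be controlled. Since $g_{a,b}$ is holomorphic in $u$, the Cauchy--Riemann equations imply that the two partial derivatives of $\arg g_{a,b}$ with respect to $\mathrm{Re}(u)$ and $\mathrm{Im}(u)$ both vanish at a point where $g_{a,b}\neq 0$ if and only if $\partial_{u}g_{a,b}(u,t)=0$. Hence an argument-critical point is a pair $(u,t)$ with $g_{a,b}(u,t)\neq 0$, $\partial_{u}g_{a,b}(u,t)=0$, and $\mathrm{Im}(\partial_{t}g_{a,b}/g_{a,b})(u,t)=0$. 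The goal reduces to choosing the Fourier parametrisation so that on the discriminant locus $\{\partial_{u}g_{a,b}=0\}\cap\{g_{a,b}\neq 0\}$ the quantity $\mathrm{Im}(\partial_{t}g_{a,b}/g_{a,b})$ is nowhere zero.

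First I would build the parametrisation component-wise. For each link component $C$ of the closure of $w$, start from a smooth embedding of the corresponding strands in which every $\sigma_{i}$-crossing carries the single sign dictated by strict homogeneity, and approximate by finite Fourier series $F_{C}$, $G_{C}$ of the form (\ref{eq:trigpoly}). Close enough approximations give parametric curves isotopic to the original, so the $g_{a,b}$ defined as in (\ref{eq:gab}) still has the correct braid as its zero set on $\mathbb{C}\times[0,2\pi]$. There is still substantial freedom in the approximation, which I would use to make the $y$-coordinates move strictly monotonically between consecutive crossings and ``sharply'' through them.

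Second I would analyse the discriminant locus. For small enough $a,b$ the roots of $\partial_{u}g_{a,b}(\cdot,t)$ are simple and depend smoothly on $t$, so the locus decomposes into $s-1$ smooth closed curves $u_{k}(t)\subset\mathbb{C}\times S^{1}$. Using $\partial_{u}g_{a,b}(u_{k}(t),t)=0$, a direct computation gives
\begin{equation*}
\frac{\mathrm{d}}{\mathrm{d}t}\arg g_{a,b}(u_{k}(t),t)=\mathrm{Im}\left(\frac{\partial_{t}g_{a,b}}{g_{a,b}}\right)(u_{k}(t),t),
\end{equation*}
so the task reduces to showing that this derivative has constant sign along each $u_{k}$. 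The partial-fraction identity $\partial_{t}g_{a,b}/g_{a,b}=-\sum_{j}\beta_{j}'(t)/(u-\beta_{j}(t))$ with $\beta_{j}=aX_{j}+\mathrm{i}bY_{j}$ exhibits a dominant rotational contribution near each crossing whose sign matches the crossing sign; strict homogeneity makes these signs coherent across all occurrences of a given $\sigma_{i}$, so no global cancellation can arise.

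The main obstacle will be promoting this sign coherence from an integrated statement to a pointwise one. The integrated version is essentially equivalent to the classical fact that closures of homogeneous braids are fibered, which guarantees that $\int_{0}^{2\pi}\mathrm{Im}(\partial_{t}g_{a,b}/g_{a,b})\,\mathrm{d}t$ along each $u_{k}$ equals a nonzero integer multiple of $2\pi$. Ensuring that the integrand itself never vanishes requires a rigid choice of Fourier parametrisation in which the dominant local term from the nearest crossing outweighs the contributions from all other crossings, uniformly along every critical curve $u_{k}$. The hard part will be showing that such a uniform rigid shape can be realised within the finite Fourier class (\ref{eq:trigpoly}), and that $a$ and $b$ can be scaled jointly so as to retain control of both the location of the critical curves and the sign of the partial-fraction sum along them.
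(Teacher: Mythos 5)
Your reduction is correct as far as it goes: by the Cauchy--Riemann equations in $u$, any argument-critical point of $g_{a,b}$ lies on a critical curve $u_{k}(t)$ of $g_{a,b}(\cdot,t)$, and there the condition becomes $\mathrm{Im}\left(\partial_{t}g_{a,b}/g_{a,b}\right)(u_{k}(t),t)\neq 0$, i.e.\ non-vanishing of $\partial_{t}\arg v_{k}(t)$ for the critical values. But everything after that is precisely the content of the lemma, and it is the step you leave open. The ``nearest-crossing dominance'' mechanism only has force when two strands are close; along the stretches of a critical curve between crossings no single term of the partial-fraction sum dominates, and the sign of $\mathrm{Im}(\partial_{t}g_{a,b}/g_{a,b})$ there depends on global features of the chosen $F_{C},G_{C}$ --- indeed the paper itself remarks (Section \ref{lemniscatesection}) that for $b=0$ this derivative along the critical curves is merely piecewise constant where defined, so a generic approximating parametrisation can perfectly well make it vanish or change sign. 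The appeal to Stallings is also not available in the form you use it: fiberedness of homogeneous closures does not guarantee that each critical-value loop of an \emph{arbitrary} Fourier approximation winds a non-zero number of times about $0$ (consider exponent-sum-zero strictly homogeneous braids such as $\sigma_{1}\sigma_{2}^{-1}$: the per-loop winding must be arranged, not inherited). Minor but symptomatic: simplicity of the critical points is invariant under joint scaling of $a,b$, so ``small enough $a,b$'' does not deliver it. As written, then, the proposal is a plan whose central pointwise estimate is asserted rather than proved.

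It is also genuinely a different route from the one the paper relies on. The paper does not reprove Lemma \ref{crit}; it cites \cite{bode:2016polynomial} and sketches the construction, which runs in the opposite direction to yours: one first prescribes the critical values $v_{j}(t)$ as loops with $\partial_{t}\arg v_{j}(t)\neq 0$ --- strict homogeneity is exactly what allows consistent signs, subject to topological constraints on the braid formed by the $v_{j}(t)$ together with the zero strand (cf.\ \cite{rudolph:2001some}) --- and then reconstructs, for each $t$, polynomials with these critical values by solving $s-1$ polynomial equations, using the count of $s^{s-1}$ solution families from \cite{bcn:2002critical}; one of these families has roots tracing the desired braid, and a trigonometric-polynomial approximation of those roots preserves the absence of argument-critical points because that is an open condition on a compact parameter interval. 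To make your root-side approach work you would need a substitute for this inverse construction, namely a proof that a ``rigid'' Fourier shape with uniform pointwise sign control exists; without it the proposal has a genuine gap at its key step.
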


Together with the opening remarks to this section Lemma \ref{crit} implies a result by Stallings \cite{stallings:1978constructions} that every closure of a strictly homogeneous braid is fibered.

In practice finding a braid parametrisation as in Lemma \ref{crit} is challenging. We denote the $s-1$ critical points of $g_{a,b}(\bullet, t)$ by $u_{j,t}$ and the critical values $g_{a,b}(u_{j,t},t)$ by $v_{j,t}$. Finding a parametrisation as in Lemma \ref{crit} involves parametrising the critical values $v_{j}(t)$, $t\in[0,2\pi]$, $j=1,2,\ldots,s-1$ such that $\partial\arg(v_{j}(t))/\partial t\neq 0$ subject to topological constraints on the braid that is formed by the different $v_{j}(t)$ and a strand whose complex coordinate is zero for all $t\in[0,2\pi]$. More details on this can be found in \cite{bode:2016polynomial} and \cite{rudolph:2001some}. Then for each $t\in[0,2\pi]$ we need to solve $s-1$ polynomial equations in $s-1$ complex variables in order to find families of complex polynomials $p_{t}(u)$ which have $v_{j}(t)$ as their critical values for all $t\in[0,2\pi]$. Fixing one degree of freedom, there are $s^{s-1}$ solutions to these equations for each $t$ \cite{bcn:2002critical}, leading to $s^{s-1}$ families of polynomials $p_{t}(u)$ which have $v_{j}(t)$ as their critical values. It turns out that the roots of one of these $p_{t}(u)$ will form the desired braid, which can then be approximated by trigonometric polynomials.

Suppose now that $B$ is the square of another braid word, say $B=w^{2}$. Let 
\begin{equation}
\label{eq:fourier2}\bigcup_{C\in\mathcal{C}}\bigcup_{j=1}^{s_{C}}\left(F_{C}\left(\frac{t+2\pi j}{s_{C}}\right),G_{C}\left(\frac{t+2\pi j}{s_{C}}\right),t\right)
\end{equation}
be a finite Fourier parametrisation of the braid $w$. Then 
\begin{equation}
\label{eq:evenpara}
\bigcup_{C\in\mathcal{C}}\bigcup_{j=1}^{s_{C}}\left(F_{C}\left(\frac{2t+2\pi j}{s_{C}}\right),G_{C}\left(\frac{2t+2\pi j}{s_{C}}\right),t\right)
\end{equation}
is a finite Fourier parametrisation of the braid $B$. Furthermore, if we use the parametrisation given in Equation (\ref{eq:evenpara}) to define $g_{a,b}$ and $p_{a,b,k}$ as in Lemma \ref{weakest} we find that all exponents of $\sqrt{v\overline{v}}$ in $p_{a,b,k}$ are even. Thus working with this parametrisation ensures that $p_{a,b,k}$ is a polynomial in $u$, $v$ and $\overline{v}$ as shown in Lemma \ref{evenweak}.

By Lemma \ref{weakest} the function $p_{a,b,k}$ has a weakly isolated singularity at the origin. %We are going to prove that if the braid $B$ is of a certain form, then the origin is in fact an isolated singular point, rather than only weakly isolated.

Note that if $w$ has a parametrisation as in Equation (\ref{eq:fourier2}) such that the corresponding $g_{a,b}$ does not have any argument-critical points, then Equation (\ref{eq:evenpara}) is a parametrisation of $B=w^2$ such that the function $g_{a,b}$ corresponding to this parametrisation does not have any argument-critical points either. Thus all squares $B=w^2$ of strictly homogeneous braids $w$ have a parametrisation as in Equation (\ref{eq:evenpara}) such that the corresponding $g_{a,b}$ does not have any argument-critical points and the corresponding $p_{a,b,k}$ with $q_{1}=q_{2}=0$ is a polynomial in $u$, $v$ and $\overline{v}$ with an isolated singular point at the origin.

We now use Lemma \ref{crit} to show the following lemma.
\begin{lemma}
\label{isolated}
Let $B=w^2$, where $w$ is a strictly homogeneous braid. Then there exists a finite Fourier parametrisation of $B$ as in Equation (\ref{eq:fourier}) such that the resulting $p_{a,b,k}$ with $q_{1}=q_{2}=0$ has an isolated singularity at the origin.
\end{lemma}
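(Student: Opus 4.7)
The plan is to combine the strong Fourier parametrisation from Lemma \ref{crit} with the time-doubling trick of Section \ref{polynomial}, and then upgrade the weak isolation of Lemma \ref{weakest} to genuine isolation by computing the rank of the real Jacobian of $p_{a,b,k}$ off the vanishing set.

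First, by Lemma \ref{crit} I would pick a finite Fourier parametrisation of $w$ as in Equation (\ref{eq:fourier2}) for which the corresponding braid polynomial $g^{(w)}_{a,b}$ has no argument-critical points. Parametrising $B=w^{2}$ via Equation (\ref{eq:evenpara}) then gives the braid polynomial $g^{(B)}_{a,b}(u,t)=g^{(w)}_{a,b}(u,2t)$; by the chain rule the $t$-component of $\nabla\arg g^{(B)}_{a,b}$ is just twice that of $\nabla\arg g^{(w)}_{a,b}$, so $g^{(B)}_{a,b}$ inherits the non-vanishing of $\nabla_{\mathbb{R}^{3}}\arg$. By the construction preceding Lemma \ref{evenweak}, the associated $p_{a,b,k}$ with $q_{1}=q_{2}=0$ is a polynomial in $u$, $v$ and $\overline{v}$, and Lemma \ref{weakest} provides a weakly isolated singularity at the origin for $k$ large enough and $a,b>0$ small enough.

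The heart of the proof is to rule out additional critical points off the vanishing set near the origin. On $\{v\neq 0\}$ the substitution $u'=u/r^{2k}$, $v=r e^{\mathrm{i}t}$ is a diffeomorphism, and in the coordinates $(x',y',r,t)$ the map takes the clean form $p_{a,b,k}=r^{2sk}\,g^{(B)}_{a,b}(u',t)$. Two distinguished $2\times 2$ minors of the real Jacobian then acquire especially simple expressions: the minor from the $(x',y')$ columns equals $r^{4sk}|\partial_{u'}g^{(B)}_{a,b}|^{2}$ by the Cauchy--Riemann equations (applied to the holomorphic dependence of $g^{(B)}_{a,b}$ on $u'$), and the minor from the $(r,t)$ columns is a positive multiple of $r^{4sk-1}|g^{(B)}_{a,b}|^{2}\,\partial_{t}\arg g^{(B)}_{a,b}$. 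If the Jacobian had rank less than two at a point with $v\neq 0$ and $g^{(B)}_{a,b}\neq 0$, both minors would vanish, forcing simultaneously $\partial_{u'}g^{(B)}_{a,b}=0$ and $\partial_{t}\arg g^{(B)}_{a,b}=0$; but Cauchy--Riemann immediately gives $\partial_{x'}\arg g^{(B)}_{a,b}=\partial_{y'}\arg g^{(B)}_{a,b}=0$ whenever $\partial_{u'}g^{(B)}_{a,b}=0$ and $g^{(B)}_{a,b}\neq 0$, so $\nabla_{\mathbb{R}^{3}}\arg g^{(B)}_{a,b}$ would vanish, contradicting the choice of parametrisation. Points with $v\neq 0$ that do lie on $p_{a,b,k}^{-1}(0)$ are handled by the very same first minor, since $g^{(B)}_{a,b}(\cdot,t)$ has only simple roots and hence $\partial_{u'}g^{(B)}_{a,b}\neq 0$ there.

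The axis $\{v=0\}$ is immediate: since every monomial of $p_{a,b,k}$ other than $u^{s}$ carries a positive power of $v$ or $\overline{v}$, one has $p_{a,b,k}(u,0)=u^{s}$, and the holomorphicity of $p_{a,b,k}$ in $u$ yields $\partial_{u}p_{a,b,k}(u_{0},0)=s\,u_{0}^{s-1}\neq 0$ at any $(u_{0},0)$ with $u_{0}\neq 0$, so the Jacobian has rank two there as well. This leaves the origin as the only critical point in a neighbourhood of it, proving that the singularity is isolated. The main obstacle I foresee is the middle paragraph: choosing the coordinates $(u',r,t)$ is what makes the three-variable argument-critical condition translate cleanly into non-vanishing of the four-variable minors, and the whole argument really pivots on recognising that these two particular $2\times 2$ minors already exhaust the ways in which the Jacobian can drop rank.
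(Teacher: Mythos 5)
Your proposal is correct and follows essentially the same route as the paper: the same combination of Lemma \ref{crit}, the time-doubling parametrisation (\ref{eq:evenpara}) with Lemma \ref{evenweak}/\ref{weakest}, and then exclusion of critical points away from the origin via the Cauchy--Riemann equations, simplicity of the roots of $g_{a,b}(\cdot,t)$, the identity $p_{a,b,k}(u,0)=u^{s}$, and the absence of argument-critical points. Your two $2\times 2$ minors in the rescaled coordinates are exactly the paper's computation in disguise: the $(r,t)$ minor being $2sk\,r^{4sk-1}|g_{a,b}|^{2}\,\partial_{t}\arg g_{a,b}$ is the paper's observation that $\partial_{r}p_{a,b,k}=2sk\,r^{-1}p_{a,b,k}$ at points where $\partial_{u}p_{a,b,k}=0$, combined with condition (\ref{eq:notzero}).
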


\begin{proof}
As discussed above, $B$ has a finite Fourier parametrisation such that the corresponding $g_{a,b}$ does not have any argument-critical points and the corresponding $p_{a,b,k}$ is a polynomial in $u$, $v$ and $\overline{v}$.
%We construct $p_{a,b,k}$ using this parametrisation.

Assume $(u,v)$ is a critical point of $p_{a,b,k}$ that is not the origin. Then since $p_{a,b,k}$ is a polynomial in $u$, it must be $\frac{\partial p_{a,b,k}}{\partial u}(u,v)=0$. Otherwise the Cauchy-Riemann equations would guarantee that the Jacobian of $p_{a,b,k}$ at $(u,v)$ has full rank. Since for $v=0$ we have that $p_{a,b,k}(u,0)=u^{s}$, a critical point which is not the origin must have non-zero $v$.

It follows from $p_{a,b,k}(u,re^{it})=r^{2sk}g_{a,b}(u/r^{2k},t)$ that for every $u\in\mathbb{C}$, $v=re^{it}\in\mathbb{C}\backslash\{0\}$ with $\frac{\partial p_{a,b,k}}{\partial u}(u,v)=0$ we have 
\begin{equation}\frac{\partial p_{a,b,k}}{\partial arg(v)}(u,re^{it})=r^{2sk}\frac{\partial g_{a,b}}{\partial t}(u,e^{it})\neq 0,
%\frac{\mathrm{Im}(p_{a,b,k}(u,v))\mathrm{Re}\left(\frac{\partial p_{a,b,k}(u,v)}{\partial arg(v)}\right)-\mathrm{Re}(p_{a,b,k}(u,v))\mathrm{Im}\left(\frac{\partial p_{a,b,k}(u,v)}{\partial arg(v)}\right)}{|p_{a,b,k}(u,v)|^{2}}\neq 0.
\end{equation}
since $g_{a,b}$ does not have any argument-critical points.

Consider now the Jacobian of $p_{a,b,k}$ at a point $(u,v)\in\mathbb{C}\times(\mathbb{C}\backslash \{0\})$ with $\frac{\partial p_{a,b,k}}{\partial u}(u,v)=0$ using $(\mathrm{Re}(u),\mathrm{Im}(u),|v|,\arg(v))$ as a basis for $\mathbb{R}^4$ (whether the Jacobian has full rank or not does not depend on the choice of basis). We already know that in this basis the Jacobian has the form  
\begin{equation}
\label{eq:jacobian}
\nabla p_{a,b,k}(u,v)=
\begin{pmatrix}
0 & 0 & \alpha & \beta\\
0 & 0 & \gamma & \delta
\end{pmatrix},
\end{equation}
where $\alpha=\partial \mathrm{Re}(p_{a,b,k})/\partial r$, $\beta=\partial \mathrm{Re}(p_{a,b,k})/\partial \arg(v)$, $\gamma=\partial \mathrm{Im}(p_{a,b,k})/\partial r$,\\ $\delta=\partial \mathrm{Im}(p_{a,b,k})/\partial \arg(v)$ and 
\begin{equation}
\label{eq:notzero}
\mathrm{Im}(p_{a,b,k})\beta-\mathrm{Re}(p_{a,b,k})\delta\neq 0.
\end{equation}

We calculate 
\begin{align}
\frac{\partial p_{a,b,k}}{\partial r}(u,re^{it})&=r^{2sk}\frac{\partial f_{a,b}}{\partial r}\left(\frac{u}{r^{2k}},e^{it}\right)+2skr^{2sk-1}f_{a,b}\left(\frac{u}{r^{2k}},e^{it}\right)\nonumber \\
&=-2kur^{2k(s-1)-1}\frac{\partial f_{a,b}}{\partial u}+2skr^{2sk-1}f_{a,b}\left(\frac{u}{r^{2k}},e^{it}\right)\nonumber \\
&=2skr^{-1}p_{a,b,k}(u,re^{it}),
\end{align}
where the last equality follows from $\frac{\partial f_{a,b}}{\partial u}\left(\frac{u}{r^{2k}},e^{it}\right)=r^{2k(1-s)}\frac{\partial p_{a,b,k}}{\partial u}(u,re^{it})=0$.

But this means that by Equation (\ref{eq:notzero})
\begin{align}
\beta\gamma-\alpha\delta&=2skr^{2sk-1}\frac{\partial \mathrm{Im}(p_{a,b,k}}{\partial r}(u,vre^{it}))\beta-2skr^{2sk-1}\frac{\partial \mathrm{Re}(p_{a,b,k}}{\partial r}(u,v))\delta\nonumber\\
&=2skr^{-1}(\mathrm{Im}(p_{a,b,k}(u,v))\beta-\mathrm{Re}(p_{a,b,k}(u,v))\delta)\neq 0.
\end{align}
Hence $\left(\begin{smallmatrix}\alpha & \beta\\ \gamma & \delta\end{smallmatrix}\right)$ has rank 2 and $(u,v)$ is a regular point. Thus the origin is the only critical point of $p_{a,b,k}$ and therefore isolated.
\end{proof}

Lemma \ref{isolated} concludes the proof Theorem \ref{even}. We have shown that closures of even powers of strictly homogeneous braids are real algebraic.

\subsection{More constructions of real algebraic links}

In Section \ref{weak} we introduced two classes of braid parametrisations that lead to $p_{a,b,k}$ being a polynomial map if $q_{1}$ and $q_{2}$ are chosen appropriately. The first of these classes is the set of squares of braids (cf. Lemma \ref{evenweak}) and the second class consists of the braids whose parametrisations satisfy certain arithmetic conditions (cf. Lemma \ref{odd}). For the first class we showed in the proof of Lemma \ref{isolated} that one extra condition, namely the absence of argument-critical points, is sufficient to guarantee that $p_{a,b,k}$ has an isolated singular point. In this section we investigate the effects of such a parametrisation of braids that belong to the second class.

In the proof of Lemma \ref{isolated} the stretching parameters $a$ and $b$ are constants. In the following they depend on $r=|v|$, since for the relevant polynomial $p_{a,b,k}$ for braid parametrisations of the second class $q_{1}$ and $q_{2}$ are not both zero. While large parts of the proof of Lemma \ref{isolated} remain unchanged, this fact implies that the derivatives with respect to $r$ differ.  

%%%%%%%%%%%%%%%

%Now that we know that $\tilde{F}_{k}$ is a polynomial in $u$, $v$ and $\overline{v}$ we have to imitate the proof of Lemma \ref{isolated} for $\tilde{F}_{k}$. The difference to the original proof is that $a$ and $b$, which were constant in Lemma \ref{isolated}, now depend on $r$, which will change the corresponding derivatives. The result however is not changed by this fact.

Recall that the arithmetic condition in Lemma \ref{odd} requires a finite Fourier parametrisation of the braid where for both the $x$-coordinate and the $y$-coordinate all frequencies with non-zero coefficients are in the same residue class $\ \mathrm{mod}\  2^{m+1}$, where $2^m$ is the largest power of 2 dividing $s_{C}$, the number of strands in each link component. 

This way every parametrisation of this form specifies two residue classes $2^{m+1}$, one for the $x$-coordinate and for the $y$-coordinate. While in general these two classes are not identical, we begin by studying the special case where they are.

\begin{lemma}
\label{same}
Let $B$ be a braid with a parametrisation as in Lemma \ref{odd} such that all $j\in\{0,1,\ldots,\max\{N_{C},M_{C}\}\}$ with non-vanishing $a_{C,j}$ or $b_{C,j}$ are in the same residue class $\ \mathrm{mod}\  2^{m+1}$, i.e. $x=y$, and such that the function $g_{a,b}$ corresponding to this parametrisation does not have any argument-critical points for some $a,b>0$. Then $p_{a,b,k}$ with $q_{1}=q_{2}=x/2^m$ has an isolated singular point at the origin and the vanishing set of $p_{a,b,k}$ on $S^{3}_{\rho}$ is the closure of $B$ for all positive $\rho\leq 1$.
\end{lemma}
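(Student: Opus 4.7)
The plan is to adapt the proof of Lemma~\ref{isolated} to this setting. The key observation is that the hypothesis $q_{1}=q_{2}=:q$, with $q=x/2^{m}$, allows the two separate rescalings of the parameters $a$ and $b$ to be absorbed into a single uniform scaling of the roots of $g_{a,b}(\cdot,t)$, preserving the exact radial homogeneity of $p_{a,b,k}$ that was crucial in the earlier Jacobian computation.

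First I would establish the homogeneity identity
\begin{equation}
g_{r^{q}a,\,r^{q}b}(u,t)=r^{qs}\,g_{a,b}(u/r^{q},t),
\end{equation}
which follows by factoring $r^{q}$ out of each of the $s$ linear factors in (\ref{eq:gab}). Substituting this into (\ref{eq:firstscaling}) gives
\begin{equation}
p_{a,b,k}(u,v)=r^{s(2k+q)}\,g_{a,b}\!\left(u/r^{2k+q},\,t\right),\qquad v=re^{it},
\end{equation}
which is precisely the structural form appearing in Lemma~\ref{isolated} with the integer $k$ replaced by the (possibly fractional) exponent $k+q/2$. Lemma~\ref{odd} guarantees that, despite $q$ being fractional, $p_{a,b,k}$ is nonetheless a genuine polynomial in $u$, $v$, $\overline{v}$; and Lemma~\ref{general} applied with $q_{1}=q_{2}=q$ already delivers weak isolation at the origin together with the identification of $p_{a,b,k}^{-1}(0)\cap S^{3}_{\rho}$ with the closure of $B$ for every $\rho\in(0,1]$.

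It then remains to upgrade weak isolation to isolation, and here I would copy the Jacobian argument of Lemma~\ref{isolated} essentially verbatim. Assume $(u,v)\neq(0,0)$ is a critical point of $p_{a,b,k}$ viewed as a map $\mathbb{R}^{4}\to\mathbb{R}^{2}$. The Cauchy--Riemann equations force $\partial p_{a,b,k}/\partial u\,(u,v)=0$; since $p_{a,b,k}(u,0)=u^{s}$ this forces $v=re^{it}$ with $r>0$; and since each polynomial $g_{a,b}(\cdot,t)$ has only simple roots, the consequent vanishing $\partial g_{a,b}/\partial u\,(u/r^{2k+q},t)=0$ implies $g_{a,b}(u/r^{2k+q},t)\neq 0$, i.e.\ $p_{a,b,k}(u,v)\neq 0$. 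Differentiating in $r$ (with $u$ and $t$ held fixed) and using the vanishing of the $u$-derivative term yields the proportionality
\begin{equation}
\frac{\partial p_{a,b,k}}{\partial r}(u,v)=\frac{s(2k+q)}{r}\,p_{a,b,k}(u,v).
\end{equation}
In the basis $(\mathrm{Re}(u),\mathrm{Im}(u),r,\arg(v))$ the Jacobian thus takes the form $\left(\begin{smallmatrix}0 & 0 & \alpha & \beta\\ 0 & 0 & \gamma & \delta\end{smallmatrix}\right)$ with $(\alpha,\gamma)$ parallel to $(\mathrm{Re}(p_{a,b,k}),\mathrm{Im}(p_{a,b,k}))$. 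The hypothesis that $g_{a,b}$ has no argument-critical points, evaluated at $(u/r^{2k+q},t)$ where $g_{a,b}\neq 0$, forces $\partial\arg(g_{a,b})/\partial t\neq 0$ and hence $\mathrm{Im}(p_{a,b,k})\beta-\mathrm{Re}(p_{a,b,k})\delta\neq 0$. Combined with the proportionality, this gives $\alpha\delta-\beta\gamma\neq 0$, so the Jacobian has full rank at $(u,v)$, contradicting the assumed criticality.

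The only ingredient beyond Lemma~\ref{isolated} is the rescaling identity in the first step, which I do not expect to be a real obstacle since it is a direct computational verification that depends only on $q_{1}=q_{2}$. The genuine difficulty would arise in the opposite case $q_{1}\neq q_{2}$ (not covered by this lemma): there the non-uniform rescaling of $a$ and $b$ destroys the clean radial proportionality $\partial p_{a,b,k}/\partial r\propto r^{-1}p_{a,b,k}$, and the determinant comparison at the end of the Jacobian argument would require genuinely new input.
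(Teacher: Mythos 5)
Your proposal is correct and follows essentially the same route as the paper: polynomiality from Lemma~\ref{odd}, weak isolation and the identification of the link on every $S^3_\rho$ from Lemma~\ref{general}, and then the Jacobian argument of Lemma~\ref{isolated} repeated verbatim after observing that the uniform rescaling $a,b\mapsto r^{q}a,r^{q}b$ only shifts the radial exponent (the paper phrases this as $g_{\lambda a,\lambda b}$ having no argument-critical points for all $\lambda>0$, you phrase it via the homogeneity identity $g_{r^{q}a,r^{q}b}(u,t)=r^{qs}g_{a,b}(u/r^{q},t)$ — equivalent statements). Your exponent $2k+q$ is in fact the one consistent with the definition in Equation~(\ref{eq:firstscaling}), so no issue there.
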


\begin{proof}
By Lemma \ref{odd} $p_{a,b,k}$ is a polynomial in $u$, $v$ and $\overline{v}$. Note that if $x=y$, then
\begin{equation}
p_{a,b,k}(u,v)=(v\overline{v})^{s(k+x/2^{m})}f_{a,b}\left(\frac{u}{(v\overline{v})^{k+x/2^{m}}},\frac{v}{\sqrt{v\overline{v}}}\right)
\end{equation} 

and hence
\begin{equation}
\label{eq:unit}
p_{a,b,k}(u,re^{it})=g_{r^{2(k+x/2^{m})}a,r^{2(k+x/2^{m})}b}(u,t)=r^{2s(k+x/2^{m})}g_{a,b}\left(\frac{u}{r^{2(k+x/2^{m})}},e^{it}\right)
\end{equation}
and $p_{a,b,k}(u,0)=u^{s}$.

It is easy to see that the origin is a singular point of $p_{a,b,k}$. %It follows from Equation (\ref{eq:unit}) that for small enough $a$ and $b$, the vanishing set of $\tilde{p}_{r^{x/2^m}a,r^{x/2^m}b,k}$ on $S^3$ is the closure of $B$. Like in the proof of Lemma \ref{weakest} we can arrange all intersections $\tilde{p}_{r^{x/2^m}a,r^{x/2^m}b,k}^{-1}(0)\cap S^{3}_{\rho}$ to be transverse for all positive $\rho\leq1$ by choosing $a$ and $b$ small enough and $k$ large enough. Since for every $v\neq0$, the roots of $\tilde{p}_{r^{x/2^m}a,r^{x/2^m}b,k}(\bullet,v)$ are simple, 
As shown in Lemma \ref{general} the vanishing set of $p_{a,b,k}$ on $S^{3}_{\rho}$ is the closure of $B$ for all positive $\rho\leq1$.

What is left to show is that the singular point at the origin is isolated. This is similar to the proof of Lemma \ref{isolated}. Again every critical point of $p_{a,b,k}$ that is not the origin must satisfy $v\neq0$ and $\partial p_{a,b,k}/\partial u=0$. Since $g_{a,b}$ does not have any argument-critical points, $g_{\lambda a,\lambda b}$ does not have any argument-critical points either for any $\lambda>0$. Hence the Jacobian $\nabla p_{a,b,k}$ is of the form of Equation (\ref{eq:jacobian}) with $\mathrm{Im}(p_{a,b,k})\beta-\mathrm{Re}(p_{a,b,k})\delta\neq0$.

The argument that the singular point at the origin is isolated is now seen to be exactly the same as in the proof of Lemma \ref{isolated}.

\end{proof}

In Lemma \ref{same} the values of the scaling parameters $a$ and $b$ are not constant anymore, but depend on $r=|v|$. This dependence however, is exactly the same for $a$ and $b$, namely $r^{x/2^{m}}$ times a constant.
If $x\neq y$, the derivatives with respect to $r=|v|$ now change in a non-trivial way compared to the derivatives in the proof of Lemma \ref{same}. Therefore, the described method of constructing isolated singularities does not necessarily work for these links.  %In the next Lemma we allow $a$ and $b$ to depend on $r$ in different ways, i.e. we make full use of Lemma \ref{odd}, where we do not require $x=y$ anymore. This leads to complications in the sense that we can not simply imitate the proof of Lemma \ref{isolated} as we did in the proof of Lemma \ref{same}. This forces us to introduce an extra condition on the points with $\partial g_{a,0}/\partial u=0$.

Note that as in Theorem \ref{even} the polynomials maps constructed in Lemma \ref{same} are semiholomorphic in $u$ and their degree with respect to $u$ is equal to the number of strands $s$ used in the construction. In contrast to the construction of semiholomorphic polynomials with knotted vanishing sets on the unit three-sphere we can not give a bound on the degree with respect to $v$ and $\overline{v}$. This is due to the fact that because of the several conditions that the braid parametrisations have to satisfy we can not use trigonometric interpolation to find a parametrisation of the desired form.  

It is not clear how Lemma \ref{same} could be used to decide whether a given link $L$ is real algebraic. At least we are not aware of any algorithm that determines if $L$ is the closure of a braid of the desired form.
 
However, Lemma \ref{same} offers a way of constructing real algebraic links that are not necessarily of the form of Lemma \ref{isolated}. For each Fourier parametrisation that satisfies the arithmetic condition, we can try to find values of $a, b>0$ such that $\arg g_{a,b}$ does not have any critical points. This might not always be possible, but if we can do this, then the closure of the braid parametrised in this way is real algebraic. The next section is devoted to following this procedure for particularly simple Fourier parametrisations.

\section{Real algebraic lemniscate knots}
\label{lemniscatesection}

We discussed lemniscate knots in \cite{bode:2016lemniscate} as a family of links that have a parametrisation as in Equation (\ref{eq:para}) of a  particularly simple form. The $(s,\ell,r)$-lemniscate link is the closure of the braid given by
\begin{equation}
\label{eq:lemnipara}
\bigcup_{j=0}^{s-1}\left(\cos\left(\frac{rt+2\pi j}{s}\right),\sin\left(\frac{\ell(rt+2\pi j)}{s}\right),t\right)\qquad t\in[0,2\pi].
\end{equation} 

Several properties of this family of links can be found in \cite{bode:2016lemniscate}, among others that a braid parametrised by Equation (\ref{eq:lemnipara}) has a braid word of the form $(\sigma_{1}^{\varepsilon_{1}}\sigma_{3}^{\varepsilon_{3}}\ldots\sigma_{2}^{\varepsilon_{2}}\sigma_{4}^{\varepsilon_{4}}\ldots)^{r}$, where $\varepsilon_{j}\in\{\pm1\}$ depends on $\ell$ and $s$. In particular, this braid word is strictly homogeneous and hence there exists a Fourier parametrisation of this braid whose corresponding function $g_{a,b}$ does not have any phase-critical points for some $a,b>0$. However, this parametrisation might be different from the one given in Equation (\ref{eq:lemnipara}).

Lemma \ref{general} and Lemma \ref{odd} give us a way of explicitly constructing any lemniscate link as the link of a weakly isolated singularity, since the defining parametrisation obviously satisfies the conditions of Lemma \ref{odd}.

The lemniscate links for which Lemma \ref{same} implies the stronger notion of isolation of the singular point must satisfy 
\begin{equation}
\label{eq:relation}
r\ell\equiv r\ \mathrm{mod}\  2^{m+1},
\end{equation} 
where $2^m$ is the largest power of 2 dividing $s_{C}=s/\gcd(s,r)$. Since all lemniscate links are closures of strictly homogeneous braids, all lemniscate links with even $r$ are real algebraic and can be constructed as links of isolated real singularities as described in Section \ref{real}. We can hence assume that $r$ is odd and thus $2^m$ is the largest power dividing $s$.

If $r$ is odd, then it is coprime to $2^{m+1}$ and hence Equation (\ref{eq:relation}) implies $\ell\equiv 1\ \mathrm{mod}\  2^{m+1}$. These are all lemniscate links that Lemma \ref{same} can potentially be applied to and that are not already covered by Section \ref{real}.

We encounter one typical intricacy in the application of Lemma \ref{same}. As we have seen, a lemniscate link with $\ell\equiv 1\ \mathrm{mod}\  2^{m+1}$ has a Fourier parametrisation that satisfies the desired arithmetic properties. That same braid also has a parametrisation that satifies the condition that $g_{a,b}$ does not have any argument-critical points. In order to use Lemma \ref{same} we need to find a parametrisation that satisfies both conditions simultaneously.

Note that by Lemma \ref{same} finding a value for $b$ such that $g_{1,b}$, constructed as in Equation (\ref{eq:gab}) from Equation (\ref{eq:lemnipara}), does not have any argument-critical points is sufficient to show that the closure of the braid is real algebraic.
Finding such a $b$ for the $(s,\ell,r)$-lemniscate link for $r=1$ shows that every $(s,\ell,r)$-lemniscate link is real algebraic.

We denote by $u_{k}(t)$, $k=1,2,\ldots,s-1$ the $s-1$ solutions of $\frac{\partial g_{1,b}}{\partial t}(u,t)=0$ for a given $t\in[0,2\pi]$ and need to check that 
\begin{equation}\frac{\partial \arg g_{1,b}(u_{k}(t),t)}{\partial t}=\mathrm{Im} \sum_{j=1}^{s}\frac{\frac{1}{s}\sin\left(\frac{t+2\pi j}{s}\right)-\frac{ib\ell}{s}\cos\left(\frac{\ell(t+2\pi j)}{s}\right)}{u_{k}(t)-\cos\left(\frac{t+2\pi j}{s}\right)-ib\sin\left(\frac{\ell(t+2\pi j)}{s}\right)}\neq 0
\end{equation}
for all $k=1,2,\ldots,s-1$ and all $t\in[0,2\pi]$. 

In the case of $\ell=3$ and $s=5$, which are the lowest numbers satisfying $\ell\equiv 1\ \mathrm{mod}\  2^{m+1}$ that do not lead to torus links, we find numerically that it is sufficient to let $b$ equal $1/4$. Thus the $(s=5,\ell=3,r)$-lemniscate link is real algebraic for every $r$.

This includes several examples of real algebraic links that are not covered by Theorem \ref{even}.

Preliminary numerical investigation indicates that for other lemniscate links small enough choices of $b$ again lead to braid polynomials $g_{1,b}$ without any argument-critical points. We may thus conjecture that all lemniscate links with $\ell\equiv 1\ \mathrm{mod}\  2^{m+1}$ are real algebraic, but at the moment this remains an open problem.

An analytic proof of this would be desirable, but note that for $b=0$ the function $\frac{\partial \arg g_{1,0}}{\partial t}(u_{k}(t),t)$ is piecewise constant where it is defined. This means that using limit arguments becomes challenging.

It becomes increasingly harder to determine sufficient values of $b$ numerically as $\ell$ and $s$ increase, since it involves finding the roots of a continuous family of polynomials of degree $s-1$.

\section{The strong Milnor condition}
\label{milnor}
%All real algebraic links are, like algebraic links, fibered. 
For a complex plane curve $f:\mathbb{C}^2\to\mathbb{C}$ with an isolated singularity at the origin $f/|f|$ automatically is a fibration of $S^3_{\epsilon}\backslash f^{-1}(0)$ over $S^1$ for small enough $\epsilon>0$. Even though real algebraic links are fibered, it is not always the case that $f/|f|$ is a fibration of $S^3_{\epsilon}\backslash f^{-1}(0)$ when $f:\mathbb{R}^4\to\mathbb{R}^2$ is a polynomial with an isolated singularity.

The following definitions and Theorem \ref{dimp} can be found in \cite{cisneros:2010real} and \cite{cisneros:2011real}.

%fibered knots, but in contrast to algebraic links $f/|f|$ not necessarily fibration.

%definition strong Milnor condition
\begin{definition}
Let $f:\mathbb{R}^4\to\mathbb{R}^2$ be a polynomial map with isolated singular point at the origin. Then $f$ satisfies the \textit{strong Milnor condition} if there exists an $\epsilon>0$ such that 
\begin{equation}
f/|f|:S^{3}_{\rho}\backslash f^{-1}(0)\to S^1
\end{equation}
is a fibration for all $0<\rho<\epsilon$.
\end{definition}

In this section we show that the maps $p_{a,b,k}$ as in Lemma \ref{isolated} and Lemma \ref{same} satisfy the strong Milnor condition. We start with a definition.
\begin{definition}
Let $f:\mathbb{R}^4\to\mathbb{R}^2$ be a polynomial map with an isolated singularity at the origin and $\mathcal{L}_{\ell}\subset \mathbb{R}^2$ be the line through the origin corresponding to $\ell\in\mathbb{RP}^1$. Then we define $X_{\ell}=\{x\in\mathbb{R}^4:f(x)\in \mathcal{L}_{\ell}\}$ and call $X=\{X_{\ell}:\ell\in\mathbb{RP}^1\}$ the canonical pencil.
\end{definition}

Note that with this definition each $X_{\ell}$ is a 3-dimensional manifold and the different $X_{\ell}$ meet at $f^{-1}(0)$.

\begin{definition}
We say a map $f:\mathbb{R}^4\to\mathbb{R}^2$ is $d$-regular with respect to the standard metric on $\mathbb{R}^4$ if there exists an $\epsilon>0$ such that every three-sphere $S^{3}_{\rho}$ of radius $\rho<\epsilon$ intersects all $X_{\ell}\backslash f^{-1}(0)$ transversely (provided the intersection is non-empty).
\end{definition}

The property of $d$-regularity can be used to study if a given map satisfies the strong Milnor condition.
\begin{theorem}
\label{dimp}
(cf. \cite{cisneros:2010real})
If a polynomial map $f:\mathbb{R}^4\to\mathbb{R}^2$ with an isolated singularity at the origin is $d$-regular with respect to the standard metric, then it satisfies the strong Milnor condition.
%Theorem that transverse intersections of pencils with spheres are equivalent to strong Milnor condition
\end{theorem}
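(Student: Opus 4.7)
The plan is to prove the theorem in two stages: first show that $d$-regularity forces $f/|f|$ to be a submersion on $S^{3}_{\rho}\setminus f^{-1}(0)$ for every small $\rho$, then upgrade this submersion to a locally trivial fibration by constructing an explicit horizontal vector field whose flow trivialises $f/|f|$ over $S^1$.

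For the submersion step, I would identify the fibers of $f/|f|$ with the intersections of the sphere with the half-pieces of the canonical pencil. At a point $x\in S^{3}_{\rho}\setminus f^{-1}(0)$ with $f(x)\in \mathcal{L}_{\ell}\setminus\{0\}$, the kernel of the full differential $d(f/|f|)_{x}:T_{x}\mathbb{R}^{4}\to T_{f(x)/|f(x)|}S^{1}$ is precisely the 3-dimensional space $T_{x}X_{\ell}$, because $X_{\ell}$ is the preimage under $f$ of the line through $f(x)$. The $d$-regularity hypothesis asserts that $S^{3}_{\rho}$ is transverse to $X_{\ell}\setminus f^{-1}(0)$, so
\begin{equation}
T_{x}S^{3}_{\rho}+T_{x}X_{\ell}=T_{x}\mathbb{R}^{4}.
\end{equation}
Applying $d(f/|f|)_{x}$ to both sides and using that $T_{x}X_{\ell}$ lies in the kernel, one concludes that $d(f/|f|)_{x}$ already surjects when restricted to $T_{x}S^{3}_{\rho}$. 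Hence $f/|f|:S^{3}_{\rho}\setminus f^{-1}(0)\to S^{1}$ is a submersion at every point.

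For the fibration step, I would construct a smooth horizontal vector field $\xi$ on $S^{3}_{\rho}\setminus f^{-1}(0)$ with $d(f/|f|)(\xi)=\partial_{\theta}$, using a partition of unity on submersion charts supplied by the rank theorem. The flow of $\xi$ is then fiber-preserving, and if it is complete it gives a global trivialisation, making $f/|f|$ a fiber bundle over the circle. The main obstacle, and the reason one needs $d$-regularity rather than just a submersion, is completeness: the domain $S^{3}_{\rho}\setminus f^{-1}(0)$ is non-compact, and a priori integral curves of $\xi$ could escape to the link $f^{-1}(0)\cap S^{3}_{\rho}$ in finite time. Here the full force of $d$-regularity, demanded on the entire range of radii below $\epsilon$, enters decisively: the radial vector field of $\mathbb{R}^{4}$ is transverse to every $X_{\ell}\setminus f^{-1}(0)$ on the whole family of spheres, giving uniform control on the angle between $X_{\ell}$ and $S^{3}_{\rho}$ in a tubular neighbourhood of the link. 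Concretely, near the link I would decompose $\xi$ into a component tangent to $X_{\ell}$ and a component transverse to it, discard the tangential part, and rescale so that the identity $d(f/|f|)(\xi)=\partial_{\theta}$ still holds while $\|\xi\|$ remains bounded. Integral curves then extend for all time, Ehresmann's theorem (or a direct integration around $S^{1}$) produces the trivialisation, and this is exactly the strong Milnor condition.
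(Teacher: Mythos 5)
First, note that the paper does not prove Theorem \ref{dimp} at all: it is quoted with the citation \cite{cisneros:2010real}, so there is no in-paper argument to compare against; your attempt has to be measured against the known proof of Cisneros-Molina and Ara\'ujo dos Santos (and Cisneros-Molina--Seade--Snoussi). Your first stage is fine: away from the origin the differential of $f$ has rank two (this is what ``isolated singularity'' means in this paper), so near a point $x$ with $f(x)\neq 0$ the set $X_{\ell}$ is smooth and coincides locally with the fibre of $f/|f|$, hence $T_{x}X_{\ell}=\ker d(f/|f|)_{x}$, and transversality $T_{x}S^{3}_{\rho}+T_{x}X_{\ell}=T_{x}\mathbb{R}^{4}$ does give that $f/|f|$ restricted to $S^{3}_{\rho}\setminus f^{-1}(0)$ is a submersion.

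The genuine gap is in the second stage, precisely at the point you flag as the main obstacle. The inference ``$\|\xi\|$ remains bounded, hence integral curves extend for all time'' is false in this setting: the domain $S^{3}_{\rho}\setminus f^{-1}(0)$ is an open subset of the sphere and the deleted link lies at finite distance, so a vector field of bounded norm can perfectly well drive a trajectory into the link in finite time (compare $-\partial_{x}$ on $(0,1)$). The constraint $d(f/|f|)(\xi)=\partial_{\theta}$ only controls the angular component of $f$ along the flow; it says nothing about how fast $|f|$ may decay, and it is exactly the decay of $|f|$ that must be controlled. In Milnor's classical argument this is done by additionally bounding $d\bigl(\log|f|\bigr)(\xi)$, so that $\log|f|$ stays finite in finite time; you neither impose nor derive such a bound, and discarding the $X_{\ell}$-tangential component does not produce it. Nor can Ehresmann's theorem be invoked directly, since $f/|f|$ on the sphere complement is not proper (the fibres are non-compact, accumulating on the link). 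The actual proof in \cite{cisneros:2010real} avoids this difficulty by a different route: one first obtains the Milnor--L\^e fibration on the tube $f^{-1}(\partial D_{\delta})\cap B_{\epsilon}\to\partial D_{\delta}$, where properness holds and Ehresmann applies, and then uses $d$-regularity to build a vector field tangent to the elements $X_{\ell}$ of the canonical pencil and transverse to all small spheres, whose flow carries the tube fibration to $f/|f|$ on $S^{3}_{\rho}\setminus f^{-1}(0)$. So either you must add the missing $\log|f|$-type estimate to your direct vector-field argument (and it is not clear $d$-regularity alone hands it to you pointwise on a single sphere), or you should restructure the proof through the tube fibration as in the cited reference.
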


The concept of $d$-regularity can also be defined for metrics that are not the standard metric. In fact, $d$-regularity for some metric induced by a positive definite quadratic form is equivalent to the strong Milnor condition. In the case of the maps that we constructed in Section \ref{real} it is sufficient to consider the standard metric.

\begin{proposition}
\label{trans}
Let $p_{a,b,k}$ as constructed in Lemma \ref{isolated} or Lemma \ref{same}. Then $p_{a,b,k}$ is $d$-regular with respect to the standard metric.
%proposition that for the construction all intersections are transverse
\end{proposition}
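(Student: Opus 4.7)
The plan is to prove the stronger statement that $d\arg p_{a,b,k}$ and $d|x|^2$ are linearly independent as $1$-forms at every point $x = (u,v) \in \mathbb{R}^4\setminus\{0\}$ with $p_{a,b,k}(x) \neq 0$. Since these are the normals to the hypersurfaces $X_\ell$ and $S^3_{|x|}$ respectively, their linear independence is exactly the transversality of the two hypersurfaces at $x$, so this establishes $d$-regularity (in fact for every $\rho > 0$, not merely for small $\rho$).

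The essential structural input, common to Lemmas \ref{isolated} and \ref{same}, is a weighted homogeneity of the form $p_{a,b,k}(u, re^{it}) = r^{sB}\, g_{a,b}(u/r^B, t)$ for some $B > 0$ (with $B \geq 2$ once $k$ is chosen large enough), together with the fact that $g_{a,b}$ was built to have no argument-critical points. Setting $\phi(\tilde u_1,\tilde u_2,t) := \arg g_{a,b}(\tilde u_1 + i\tilde u_2, t)$, the chain rule in the coordinates $(u_1, u_2, r, t)$, valid on $\{v \neq 0\}$, yields
\begin{align*}
d\arg p_{a,b,k} &= r^{-B}\phi_{\tilde u_1}\, du_1 + r^{-B}\phi_{\tilde u_2}\, du_2 - B r^{-B-1}(u_1\phi_{\tilde u_1} + u_2\phi_{\tilde u_2})\, dr + \phi_t\, dt,\\
d|x|^2 &= 2u_1\, du_1 + 2u_2\, du_2 + 2r\, dr.
\end{align*}
Assume for contradiction that $d\arg p_{a,b,k} = \lambda\, d|x|^2$ at a point with $v \neq 0$. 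The $du_j$ coefficients force $\phi_{\tilde u_j} = 2\lambda r^B u_j$; feeding this into the $dr$ coefficient equation gives $\lambda(B|u|^2 + r^2) = 0$, whence $\lambda = 0$. The $dt$ equation then forces $\phi_t = 0$ and the $du_j$ equations force $\phi_{\tilde u_1} = \phi_{\tilde u_2} = 0$, meaning $\nabla_{\tilde u, t} \arg g_{a,b}$ vanishes at $(u/r^B, t)$, which contradicts the absence of argument-critical points of $g_{a,b}$.

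The remaining points $x = (u,0)$ with $u \neq 0$ require a separate Cartesian computation because the polar coordinates on the $v$-plane are singular at $v = 0$. Because $B \geq 2$, every correction to $u^s$ in the expansion $p_{a,b,k}(u, re^{it}) = u^s + \sum_{j<s}u^j c_j(t)\, r^{(s-j)B}$ carries a factor of $r^B \geq r^2 = v\bar v$, so the Taylor expansion of $p_{a,b,k}$ about $v = 0$ has vanishing linear part in $(v_1, v_2)$. A short calculation then gives $d\arg p_{a,b,k}|_{(u,0)} = s(u_1\, du_2 - u_2\, du_1)/|u|^2$ and $d|x|^2|_{(u,0)} = 2u_1\, du_1 + 2u_2\, du_2$, which are the angular and radial $1$-forms in the $u$-plane respectively and so linearly independent whenever $u \neq 0$. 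The main technical obstacle is precisely this edge case: verifying the vanishing of the $(v_1, v_2)$-linear part of $p_{a,b,k}$ amounts to checking that the exponent $B$ appearing in each of the two constructions is at least $2$ (which holds as soon as $k \geq 1$), and in the setting of Lemma \ref{same} this involves tracking how the arithmetic conditions of Lemma \ref{odd} force the scaled polynomial to lie in $\mathbb{R}[u,v,\bar v]$.
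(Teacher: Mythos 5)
Your proof is correct, and it establishes the proposition by a dual route to the paper's. Where you show that the conormals $d\arg p_{a,b,k}$ and $d|x|^{2}$ are linearly independent away from $p_{a,b,k}^{-1}(0)$, the paper instead exhibits, in each of the three cases $u=0$, $v=0$ and $u,v\neq 0$, an explicit tangent vector to $X_{\ell}$ that does not lie in the tangent space of the sphere (for instance $(r^{2k+x/2^{m}},0,1,0)$ in the coordinates $(|u|,\arg(u),r,t)$ when $u,v\neq 0$); since $T_{x}S^{3}_{\rho}$ is a hyperplane, one such vector suffices. Both arguments rest on the same weighted homogeneity $p_{a,b,k}(u,re^{it})=r^{sB}g_{a,b}(u/r^{B},t)$, but the inputs differ: the paper's proof uses only this scaling together with $p_{a,b,k}(u,0)=u^{s}$ and never invokes the absence of argument-critical points of $g_{a,b}$, so its transversality computation would apply just as well to the merely weakly isolated maps of Sections \ref{weak} and \ref{polynomial}; your main case ($v\neq 0$) does use that hypothesis, which is legitimately available in both Lemma \ref{isolated} and Lemma \ref{same}, and in exchange your computation yields the slightly stronger conclusion that $d\arg p_{a,b,k}$ is nowhere zero off the zero set at every radius, i.e.\ $p_{a,b,k}$ itself has no argument-critical points there. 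Your treatment of the edge case $v=0$ is also correct but heavier than needed: the paper simply observes that, since $p_{a,b,k}(u,0)=u^{s}$, the $u$-radial direction is tangent to $X_{\ell}$ at $(u,0)$ and coincides with the radial direction of $\mathbb{R}^{4}$ there; and the point you flag as the main technical obstacle, namely that every monomial with $u$-degree $j<s$ carries $(v,\overline{v})$-degree $(s-j)B\geq 2$, is immediate from the construction (polynomiality in $u$, $v$, $\overline{v}$ being exactly the content of Lemmas \ref{evenweak} and \ref{odd}), so it is not a genuine obstacle.
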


\begin{proof}
We need to show that for all small enough radii $\rho$ and all $\ell\in\mathbb{RP}^1$ the intersection $S^{3}_{\rho}\cap X_{\ell}$ is either empty or transverse.

Let $(u,v)\in S^{3}_{\rho}\cap X_{\ell}$. There are three different cases to consider: one where $u=0$, one where $v=0$ and the remaining case where both are non-zero.

If $u=0$, then $p_{a,b,k}(0,re^{it})$ is for every fixed $t\in[0,2\pi]$ by definition either constant zero (if $g_{a,b}(0,t)=0$) or the argument $\arg(p_{a,b,k})(0,re^{it})$ is constant for all $r>0$. Hence in the basis $(\mathrm{Re}(u),\mathrm{Im}(u),r,t)$ the vector $(0,0,1,0)$ is tangent to $X_{\ell}$ at $(0,v)$. Thus the intersection with $S^{3}_{\rho}$ is transverse.

If $v=0$, then $p_{a,b,k}(u,0)=u^s$. Hence in the basis $(|u|,\arg(u),\mathrm{Re}(v),\mathrm{Im}(v))$ the vector $(1,0,0,0)$ is a tangent to $X_{\ell}$ at $(u,0)$ and hence the intersection with $S^{3}_{\rho}$ is transverse.

Now suppose both $u$ and $v$ are non-zero. Then we can use $(|u|,\arg(u),r,t)$ as a basis and by definition of $p_{a,b,k}$ the vector $(r^{2k+x/2^m},0,1,0)$ is tangent to $X_{\ell}$ at $(u,v)$. The tangent space of $S^{3}_{\rho}$ at $(u,v)$ is spanned by $(0,1,0,0)$, $(0,0,0,1)$ and $(-r/\sqrt{\rho^2-r^2},0,1,0)$. Thus the intersection is not transverse if and only if $r^{2k+x/2^m}=-r/\sqrt{\rho^2-r^2}$. Since $r>0$, this equality is never satisfied, which completes the proof of $d$-regularity of $p_{a,b,k}$.
\end{proof}

The next corollary follows from Theorem \ref{dimp} and Proposition \ref{trans}.

\begin{corollary}
The maps constructed in Lemma \ref{isolated} and Lemma \ref{same} satisfy the strong Milnor condition.
\end{corollary}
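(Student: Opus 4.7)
The plan is essentially to chain together the two ingredients that precede the corollary: Proposition \ref{trans} establishes the $d$-regularity of $p_{a,b,k}$ (in both the Lemma \ref{isolated} and Lemma \ref{same} settings) with respect to the standard Euclidean metric on $\mathbb{R}^4$, and Theorem \ref{dimp} (recalled from \cite{cisneros:2010real}) promotes $d$-regularity to the strong Milnor condition, provided the origin is an isolated singular point. Both ingredients have already been verified for the maps in question, so there is no extra work beyond invoking them.

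In more detail, I would first recall that Lemmas \ref{isolated} and \ref{same} provide polynomial maps $p_{a,b,k}:\mathbb{R}^4\to\mathbb{R}^2$ with an \emph{isolated} (not merely weakly isolated) singular point at the origin; this is precisely the hypothesis under which Theorem \ref{dimp} is stated. Next, by Proposition \ref{trans}, for every small enough radius $\rho>0$ and every $\ell\in\mathbb{RP}^1$, the intersection $S^3_\rho\cap X_\ell$ is either empty or transverse, which is the definition of $d$-regularity with respect to the standard metric. Applying Theorem \ref{dimp} then yields an $\epsilon>0$ such that
\begin{equation}
p_{a,b,k}/|p_{a,b,k}|:S^3_\rho\setminus p_{a,b,k}^{-1}(0)\to S^1
\end{equation}
is a locally trivial fibration for all $0<\rho<\epsilon$, which is exactly the strong Milnor condition.

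Since the argument is a direct citation of Theorem \ref{dimp} together with the already established Proposition \ref{trans}, there is no genuine obstacle to overcome here; the substantive content sits in the $d$-regularity check carried out in the proof of Proposition \ref{trans} and in the general equivalence from \cite{cisneros:2010real}. The only point that warrants a sentence of comment is that both families of maps (those from Lemma \ref{isolated} with $q_1=q_2=0$ and those from Lemma \ref{same} with $q_1=q_2=x/2^m$) were handled uniformly in the proof of Proposition \ref{trans} via the same tangent-vector computation, so the same conclusion applies verbatim to both cases.
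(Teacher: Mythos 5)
Your proposal is correct and follows exactly the paper's route: the corollary is obtained by combining Proposition \ref{trans} ($d$-regularity of $p_{a,b,k}$ with respect to the standard metric) with Theorem \ref{dimp}, using that Lemmas \ref{isolated} and \ref{same} supply the required isolated singular point at the origin. Nothing further is needed.
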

%corollary that all constructed maps satisfy the strong Milnor condition

We have thus shown that for small enough radii $\rho$
\begin{equation}
p_{a,b,k}/|p_{a,b,k}|:S^{3}_{\rho}\backslash p_{a,b,k}^{-1}(0)\to S^1
\end{equation}
is a fibration. 

We hope that the explicit construction of polynomials and fibrations for the links that were shown to be real algebraic in Section \ref{real} helps to investigate properties of real algebraic links and of their fibrations.

\begin{acknowledgements}
The author is grateful to Gareth Alexander, Mark Bell, Mark Dennis, David Foster, Mikami Hirasawa, Filip Misev, Daniel Peralta-Salas, Jonathan Robbins and De Witt Sumners for discussions and comments.

This work is funded by the Leverhulme Trust Research Programme Grant RP2013- K-009, SPOCK: Scientific Properties Of Complex Knots.
\end{acknowledgements}

%\disclaimer{Insert disclaimer text here.}

%%%%%%%%%% Insert bibliography here %%%%%%%%%%%%%%


\begin{thebibliography}{99}


%\bibitem{16} 

\bibitem{acampo:1975mono} N A'Campo. La fonction z{\^e}ta d'une monodromie. \textit{Comm Math Helv} \textbf{50} (1975) 233--248.



\bibitem{ak:1981all} S Akbulut, HC King.  All knots are algebraic. \textit{Comm Math Helv} \textbf{56} (1981) 339--351.

\bibitem{alexander:1923lemma} J Alexander. A lemma on a system of knotted curves, {\it Proc. Nat. Acad. Sci. USA} {\bf 9} (1923) 93--95.

\bibitem{bcn:2002critical} A. F. Beardon, T. K. Carne and T. W. Ng, The critical values of a polynomial, {\it Constr. Approx.} {\bf 18} (2002) 343--354.


\bibitem{benedetti1998:real} R Benedetti, M Shiota. On real algebraic links on $S^3$. \textit{Bolletino dell'Unione Mathematica Italiana, Serie 8 Volume 1B} \textbf{3} (1998) 585--609.

\bibitem{berry:2001hydrogen} MV Berry. Knotted zeros in the quantum states of hydrogen. \textit{Found Phys} \textbf{31} (2001) 659--667.

%\bibitem{7} 
\bibitem{bd:2001knotted} MV Berry, MR Dennis. Knotted and linked phase singularities in monochromatic waves. \textit{Proc R Soc A} \textbf{457} (2001) 2251--2263.

\bibitem{bode:2016lemniscate} B Bode, MR Dennis, D Foster, RP King. Knotted fields and explcit fibrations for lemniscate knots. \textit{arXiv: 1611.02563} (2016).

\bibitem{bode:2016polynomial} B Bode, M R Dennis. Constructing a polynomial whose nodal set is any prescribed knot or link. \textit{arXiv: 1612.06328} (2016).

%\bibitem{11} 
\bibitem{brauner:1928geometrie} K Brauner. Zur Geometrie der Funktionen zweier komplexen Ver{\"a}nderlichen II, III, IV. \textit{Abh Math Sem Hamburg} \textbf{6} (1928) 8--54.

%\bibitem{39} 
\bibitem{burau:1933knoten} K Burau. Kennzeichnung von Schlauchknoten. \textit{Abh Math Sem Hamburg} \textbf{9} (1933) 125--133.

%\bibitem{40} 
\bibitem{burau:1934verkettungen} K Burau. Kennzeichnung von Schlauchverkettungen. \textit{Abh Math Sem Hamburg} \textbf{10} (1934) 285--297.

\bibitem{cp:2016daha} I Cherednik, I Philipp. DAHA and plane curve singularities. \textit{arXiv:1605.00978v3} (2016).

\bibitem{cisneros:2010real} JL Cisneros-Molina, R. N. Ara{\'u}jo dos Santos. About the existence of Milnor fibrations. In: \textit{Real and complex singularities} eds. M Manoel, MC Romero Fuster, CTC Wall. (2010) 82--103.

\bibitem{cisneros:2011real} JL Cisneros-Molina, J Seade, J Snoussi. Milnor fibrations for real and complex singularities. In: \textit{Topology of algebraic varieties and singularities} eds. JI Cogolludo-Agust{\'i}n, E Hironaka. (2011) 345--362.

\bibitem{conway:1970enumeration} JH Conway. An enumeration of knots and links and some of their algebraic properties. \textit{Computational Problems in Abstract Algebra}. J Leech (editor). Pergamon Press (1970) 329--358.

\bibitem{bode:201652} MR Dennis, B Bode. Constructing a polynomial whose nodal set is the three-twist knot $5_2$. \textit{arXiv: 1612.06801} (2016).

%\bibitem{8} 
\bibitem{dkjop:2010isolated} MR Dennis, RP King, B Jack, K O'Holleran, MJ Padgett. Isolated opitcal vortex knots. \textit{Nature Physics} \textbf{6} (2010) 118--121.


%\bibitem{43} 
\bibitem{en:1985three} Eisenbud D, Neumann W. \textit{Three-dimensional link theory and invariants of plane curve singularities}. Princeton University Press (1985).

%\bibitem{42} 
\bibitem{kahler:1929verzweigung} E K{\"a}hler. {\"U}ber die Verzweigung einer algebraischen Funktion zweier Ver{\"a}nderlichen in der Umgebung einer singul{\"a}ren Stelle. \textit{Math Zeit} \textbf{30} (1929) 188--204.

%\bibitem{1} 
%\bibitem{lr:2012jones} X Liu, RL Ricca. 2012. The Jones polynomial for fluid knots from helicity. \textit{J Phys A: Math Theor} \textbf{45}, 205501.

%\bibitem{38} 
\bibitem{le:1972noeuds} L{\^e} D{\~u}ng Tr{\'a}ng. Sur les noeuds alg\'{e}briques. \textit{Comp Math} \textbf{25} (1972) 281--321.


\bibitem{le:1975mono} L{\^e} D{\~u}ng Tr{\'a}ng. La monodromie n'a pas de points fixes. \textit{J. Fac. Sci. Univ. Tokyo Sect. IA Math.} \textbf{22} no. 3 (1975) 409--427.

%\bibitem{45} 
\bibitem{looijenga:1971note} Looijenga E. A note on polynomial isolated singularities. \textit{Ind Math (Proc)} \textbf{74} (1971) 418--421.

%\bibitem{4} 
\bibitem{ma:2014knotted} T Machon, GP Alexander. Knotted defects in nematic liquid crystals. \textit{Phys Rev Lett} \textbf{113} (2014) 027801.



%\bibitem{15} 
\bibitem{milnor:1968singular} JW Milnor. \textit{Singular points of complex hypersurfaces}. Princeton University Press (1968).

%\bibitem{2}  
\bibitem{moffatt:1969degree} HK Moffatt. The degree of knottedness of tangled vortex lines. \textit{J Fluid Mech} \textbf{35} (1969) 117--129.

\bibitem{ors:2012hilbert} A Oblomkov, J Rasmussen, V Shende. The Hilbert scheme of a plane curve singularity and the HOMFLY homology of its link. \textit{arXiv:1201.2115} (2012).

\bibitem{os:2012hilbert} A Oblomkov, V Shende. The Hilbert scheme of a plane curve singularity and the HOMFLY polynomial of its link. \textit{Duke Math J} \textbf{161} no.7 (2012) 1277--1303.


%\bibitem{12} 
\bibitem{perron:1982noeud} B Perron. Le n{\oe}ud ``huit'' est alg{\'e}brique r{\'e}el. \textit{Inv Math} \textbf{65} (1982) 441--451.


%\bibitem{13} 
\bibitem{rudolph:1987isolated} L Rudolph. Isolated critical points of mappings from {$\mathbb{R}^{4}$} to {$\mathbb{R}^{2}$} and a natural splitting of the Milnor number of a classical fibred link : 1. Basic theory and examples. \textit{Comm Math Helv} \textbf{62} (1987) 630--645.

\bibitem{rudolph:2001some} L Rudolph, Some knot theory of complex plane curves, {\it arXiv:math/0106058} (2001).


%\bibitem{20} 
\bibitem{stallings:1978constructions} JR Stallings. Constructions of fibred knots and links. \textit{Proceedings of Symposia in Pure Mathematics} \textbf{32} (1978) 55--60.

%\bibitem{3} 
\bibitem{sutcliffe:2007knots} P Sutcliffe. Knots in the Skyrme-Faddeev model. \textit{Proc R Soc A} \textbf{463} (2007) 3001--3020.

\bibitem{viro:2001writhe} O Viro. Encomplexing the writhe. \textit{Topology, Ergodic Theory, Real Algebraic Geometry. Rokhlin's Memorial}. Amer. Math.Soc. Transl. ser 2, \textbf{202}, V Turaev, A Vershik (editors). (2001) 241--256

%\bibitem{wall:2004singular} CTC Wall. 2004. \textit{Singular points of plane curves}. Cambridge University Press



%\bibitem{41} 
\bibitem{zariski:1932topology} O Zariski. On the topology of algebroid singularities. \textit{Am J of Math} \textbf{54} (1932) 453--465.


\end{thebibliography}
\end{document}